\documentclass[11pt,letterpaper]{amsart}

\setlength{\pdfpagewidth}{\paperwidth}
\setlength{\pdfpageheight}{\paperheight} 

\usepackage{amssymb}
\usepackage{enumerate}

\newtheorem{theorem}{Theorem}
\newtheorem{proposition}[theorem]{Proposition}

\newtheorem{lemma}[theorem]{Lemma}

\newtheorem{atheorem}{Theorem}

\theoremstyle{definition}

\newcommand{\F}{\mathbb{F}}
\renewcommand{\P}{\mathbb{P}}
\newcommand{\e}{\epsilon}

\begin{document}

\title{Exceptional planar polynomials}

\author{Florian Caullery}
\address{Institut de Math{\'e}matiques de Luminy, CNRS-UPR9016, 163 av. de Luminy, case 907, 13288 Marseille Cedex 9, France.}
\email[F. Caullery]{florian.caullery@etu.univ-amu.fr}

\author{Kai-Uwe Schmidt}
\address{Faculty of Mathematics, Otto-von-Guericke University, Universit\"atsplatz~2, 39106 Magdeburg, Germany}
\email[K.-U. Schmidt]{kaiuwe.schmidt@ovgu.de}

\author{Yue Zhou}
\address{Faculty of Mathematics, Otto-von-Guericke University, Universit\"atsplatz~2, 39106 Magdeburg, Germany}
\curraddr{Department of Mathematics and System Sciences, College of Science, National University of Defense Technology, Changsha, China}
\email{yue.zhou.ovgu@gmail.com}

\date{14 February 2014}

\subjclass[2010]{Primary: 11T06; Secondary: 51E20, 11T71}


\begin{abstract}
Planar functions are special functions from a finite field to itself that give rise to finite projective planes and other combinatorial objects. We consider polynomials over a finite field $K$ that induce planar functions on infinitely many extensions of $K$; we call such polynomials exceptional planar. Exceptional planar monomials have been recently classified. In this paper we establish a partial classification of exceptional planar polynomials. This includes results for the classical planar functions on finite fields of odd characteristic and for the recently proposed planar functions on finite fields of characteristic two.
\end{abstract}

\maketitle

\section{Introduction and Results}

Let $q$ be a prime power. If $q$ is odd, a function $f:\F_q\to\F_q$ is a \emph{planar function} if, for each nonzero $\e\in\F_q$, the function
\begin{equation}
x\mapsto f(x+\e)-f(x)   \label{eqn:def_planar_odd}
\end{equation}
is a permutation on $\F_q$. Such planar functions can be used to construct finite projective planes~\cite{DemOst1968}, relative difference sets~\cite{GanSpe1975}, error-correcting codes~\cite{CarDinYua2005}, and S-boxes in block ciphers~\cite{NybKnu1993}.
\par
If $q$ is even, a function $f:\F_q\to\F_q$ cannot satisfy the above definition of planar functions. This is the motivation to define a function $f:\F_q\to\F_q$ for even $q$ to be \emph{almost perfect nonlinear} (APN) if~\eqref{eqn:def_planar_odd} is a $2$-to-$1$ map. However, there is no apparent link between APN functions and projective planes. Recently, Zhou~\cite{Zho2013} defined a natural analogue of planar functions on finite fields of characteristic two: If $q$ is even, a function $f:\F_q\to\F_q$ is a \emph{planar function} if, for each nonzero $\e\in\F_q$, the function
\[
x\mapsto f(x+\e)+f(x)+\e x
\]
is a permutation on $\F_q$. As shown by Zhou~\cite{Zho2013} and Schmidt and Zhou~\cite{SchZho2013}, such planar functions have similar properties and applications as their counterparts in odd characteristic.
\par
It is well known that every function from $\F_{q^r}$ to itself is induced by a polynomial over $\F_{q^r}$. In this paper, we study polynomials $f\in\F_q[X]$ that induce planar functions on $\F_{q^r}$ for infinitely many $r$; a polynomial $f$ satisfying this property will be called an \emph{exceptional planar polynomial}. Exceptional planar monomials have been completely classified by Leducq~\cite{Led2012} and Zieve~\cite{Zie2013} in odd characteristic and by M\"uller and Zieve~\cite{MulZie2013} in characteristic two. The aim of this paper is to establish a partial classification of exceptional planar polynomials.
\par
We first discuss the case that $q$ is odd, say $q=p^n$, where $p$ is an odd prime. Two polynomials $f,g\in\F_q[X]$ are \emph{extended affine equivalent} (EA-equivalent) if
\[
g(X)=A_1(f(A_2(X)))+A_3(X)
\]
for some polynomials $A_1,A_2,A_3\in\F_q[X]$ with the property that every nonconstant term has degree a power of $p$ and such that $A_1$ and $A_2$ induce permutations on $\F_q$. This equivalence preserves planarity for finite fields of odd characteristic (see~\cite{KyuPot2008} for a discussion on equivalences preserving planarity). Up to EA-equivalence, the only known examples of exceptional planar polynomials on finite fields of odd characteristic are:
\begin{align}
~\!\!\!\!\!\!\!
&\text{$f(X)=X^{p^k+1}$ for some nonnegative integer $k$;}   \label{eqn:exceptional_1}\\[1ex]
&\text{$f(X)=X^{(3^k+1)/2}$ for some odd positive integer $k$, where $p=3$;}   \label{eqn:exceptional_2}\\[1ex]
&\text{$f(X)=X^{10}-uX^6-u^2X^2$ for $u\in\F_{3^n}$ and odd $n$, where $p=3$.}   \label{eqn:exceptional_3}
\end{align}
The polynomial~\eqref{eqn:exceptional_1} is planar on $\F_{p^r}$ for odd $r/\gcd(k,r)$~\cite{CouMat1997}, the polynomial~\eqref{eqn:exceptional_2} is planar on $\F_{3^r}$ for $\gcd(k,r)=1$~\cite{CouMat1997}, and the polynomial~\eqref{eqn:exceptional_3} is planar on $\F_{3^{rn}}$ for odd $r$~\cite{DinYua2006} (see also~\cite{CouMat1997} for the case $u=-1$). As shown in two papers by Leducq~\cite{Led2012} and Zieve~\cite{Zie2013}, up to EA-equivalence, the polynomials~\eqref{eqn:exceptional_1} and~\eqref{eqn:exceptional_2} are the only exceptional planar monomials.
\begin{atheorem}[{Leducq~\cite{Led2012}, Zieve~\cite{Zie2013}}]
\label{thm:monomial_odd}
Let $p$ be an odd prime and let $f\in \F_{p^n}[X]$ be a monic monomial of degree $d$ with $p\nmid d$. If $f$ is exceptional planar, then either~\eqref{eqn:exceptional_1} or~\eqref{eqn:exceptional_2} holds.
\end{atheorem}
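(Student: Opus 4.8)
The plan is to reduce the classification to the theory of exceptional (permutation) polynomials and then match the resulting normal forms against the rigid shape $(X+1)^d-X^d$. Write $q=p^n$. First I would exploit the homogeneity of $f(X)=X^d$: for nonzero $\e$ one has $(x+\e)^d-x^d=\e^d\,g(x/\e)$ with $g(X)=(X+1)^d-X^d$, so $x\mapsto f(x+\e)-f(x)$ permutes $\F_{q^r}$ for one (hence every) nonzero $\e$ precisely when $g$ permutes $\F_{q^r}$. Thus $f$ is exceptional planar if and only if $g$ is a permutation polynomial of $\F_{q^r}$ for infinitely many $r$, that is, $g$ is an \emph{exceptional polynomial} over $\F_q$. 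Because $p\nmid d$, the leading term $dX^{d-1}$ survives and $\deg g=d-1$; this is the only place the hypothesis $p\nmid d$ is used.

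Next I would use the classical criterion that $g$ is exceptional over $\F_q$ if and only if $h(X,Y)=\frac{g(X)-g(Y)}{X-Y}$ has no absolutely irreducible factor in $\F_q[X,Y]$. The problem becomes: for which $d$ with $p\nmid d$ does the explicit curve $h=0$ fail to acquire such a factor? The two target families are precisely the two possible behaviours of the degree-$(d-1)$ cover of the line defined by $g$. When $d=p^k+1$ a one-line computation gives $g=X^{p^k}+X+1$, an affine additive polynomial that is wildly ramified over $\infty$; when $p=3$ and $d=(3^k+1)/2$ the polynomial $g$ has degree $(3^k-1)/2$, which is prime to $3$, and is linearly equivalent to a Dickson polynomial. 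I would therefore split the analysis according to whether $d-1$ is prime to $p$ (tame) or divisible by $p$ (wild).

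In the tame case $p\nmid d-1$ I would invoke the classification of tame exceptional polynomials: up to composition with linear polynomials an indecomposable one is a power map or a Dickson polynomial, and a composite is exceptional exactly when all of its indecomposable factors are, so it suffices to match these normal forms against $g$. The power-map alternative is eliminated because, up to a nonzero scalar, $g'=(X+1)^{d-1}-X^{d-1}$ has $d-2$ distinct roots and hence is not a perfect power once $d\ge 4$, while for $d=3$ the relevant power map $X^2$ is not a permutation in odd characteristic; this leaves the Dickson alternative, whose matching forces $p=3$ and $d=(3^k+1)/2$, giving~\eqref{eqn:exceptional_2}. In the wild case $p\mid d-1$ the additive (linearized) solutions reproduce exactly $g=X^{p^k}+X+1$, i.e.\ $d=p^k+1$, giving~\eqref{eqn:exceptional_1}.

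The main obstacle is the wild case. Besides the additive polynomials, the classification of wildly ramified exceptional polynomials admits exotic families whose geometric monodromy group is a two-dimensional linear group $\mathrm{PSL}_2$ (in any odd characteristic) or a Ree group ${}^2G_2$ (when $p=3$); showing that none of these can coincide with the very rigid polynomial $(X+1)^d-X^d$ requires controlling its monodromy together with the higher ramification over $\infty$, rather than relying on the clean tame dichotomy. A more self-contained alternative would avoid the monodromy classification entirely and instead analyse the absolutely irreducible factors of $h(X,Y)$ by hand, using Newton-polygon and B\'ezout arguments to exhibit an $\F_q$-rational absolutely irreducible factor whenever $d$ lies outside the two families; the combinatorial bookkeeping of that factorisation, again hardest for $p=3$, would then be the crux.
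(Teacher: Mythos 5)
A preliminary remark: the paper itself does not prove Theorem~\ref{thm:monomial_odd}; it quotes it from Leducq~\cite{Led2012} and Zieve~\cite{Zie2013}, and only borrows Leducq's intermediate results inside the proof of Proposition~\ref{pro:main_21}. So your attempt must be judged against those two cited proofs. Your reduction is exactly Zieve's starting point: for $f(X)=X^d$ with $p\nmid d$, exceptional planarity is equivalent to $g(X)=(X+1)^d-X^d$ being an exceptional polynomial over $\F_{p^n}$, and exceptionality is governed by the absolutely irreducible $\F_{p^n}$-factors of $\bigl(g(X)-g(Y)\bigr)/(X-Y)$; the ``self-contained alternative'' you mention at the end is Leducq's route (and is the one whose intermediate results this paper reuses). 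The framework is therefore sound, but the plan has genuine gaps at exactly the two places where the theorem is hard, so it is a program rather than a proof. The worst gap is the wild case $p\mid d-1$, which you acknowledge but underestimate: matching $g$ against a list of normal forms is impossible there even in principle, because the affine branch of the classification of exceptional polynomials (degree a power of $p$, affine monodromy) is itself incomplete; besides the additive examples and the $\mathrm{PSL}_2$ and Ree families there is no known exhaustive list. Consequently the sentence ``the additive (linearized) solutions reproduce exactly $g=X^{p^k}+X+1$'' assumes precisely what has to be proved, namely that this particular $g$, if wildly ramified and exceptional, must be additive-related. Any proof along your lines has to analyse this specific polynomial directly: writing $d-1=p^am$ with $p\nmid m$, one gets $g'=d\bigl((X+1)^m-X^m\bigr)^{p^a}$, and it is this explicit ramification data (inertia at infinity, Riemann--Hurwitz counts) that forces $d=p^k+1$; this is in essence what the cited proof does, and none of it appears in your proposal.

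The tame case $p\nmid d-1$ contains a second gap that you did not flag. Your elimination treats $g$ as if it were indecomposable: the derivative argument (squarefreeness of $g'$) only rules out $g=\ell_1\circ X^m\circ \ell_2$, and the ``Dickson alternative'' is only discussed for $g$ linearly equivalent to a single Dickson polynomial. But indecomposable tame exceptional polynomials have prime degree, so when $d-1$ is composite an exceptional $g$ is a composition of several prime-degree cyclic or Dickson factors with linear polynomials interspersed, and such mixed compositions (for instance $\ell_1\circ X^{\ell}\circ\ell_2\circ D_{\ell'}(\cdot,a)\circ\ell_3$) are untouched by your argument. This situation genuinely occurs inside the target family~\eqref{eqn:exceptional_2}: for $p=3$ and $k=5$ one has $d=122$ and $\deg g=121=11^2$, so any exceptional $g$ of this degree decomposes into two degree-$11$ factors. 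A matching step that only compares $g$ with single normal forms can neither certify this case nor exclude nearby composite degrees; carrying out the matching for arbitrary compositions (via Ritt-type decomposition theory, or via exceptionality of each factor combined with the shape of $g'$) is the actual content of the tame case, and the claim that it ``forces $p=3$ and $d=(3^k+1)/2$'' is asserted, not proved.
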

\par
A partial classification of exceptional planar polynomials was obtained by Zieve~\cite{Zie2013a}.
\begin{atheorem}[{Zieve~\cite{Zie2013a}}]
\label{thm:polynomial_Zieve}
Let $p$ be an odd prime and let $f\in \F_{p^n}[X]$ be of degree $d$. If $f$ is exceptional planar and $d\not\equiv 0,1 \pmod{p}$, then up to EA-equivalence, either $f(X)=X^2$ or~\eqref{eqn:exceptional_2} holds.
\end{atheorem}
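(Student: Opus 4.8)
The plan is to translate exceptional planarity into an absolute-irreducibility condition on an associated surface, and then to extract enough structure from the behaviour of that surface at infinity to reduce $f$ to a monomial, at which point Theorem~A applies.

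First I would encode planarity geometrically. For a fixed nonzero $\e$, the map $x\mapsto f(x+\e)-f(x)$ fails to be injective on $\F_{q^r}$ precisely when there are a nonzero $\delta$ and some $x$ with
\[
 f(x+\e+\delta)-f(x+\e)-f(x+\delta)+f(x)=0.
\]
The left-hand side vanishes on $\e=0$ and on $\delta=0$, so it equals $\e\delta\,h(x,\e,\delta)$ for a polynomial $h$, and planarity on $\F_{q^r}$ is equivalent to the surface $V:\ h=0$ in $\mathbb{A}^3$ having no $\F_{q^r}$-point with $\e\delta\neq 0$. Since $f\in\F_q[X]$, the surface $V$ is defined over $\F_q$. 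If $V$ had a geometrically irreducible component $W$ defined over $\F_q$ and not contained in the trivial locus $\{\e=0\}\cup\{\delta=0\}$, then $W$ is a surface, so $\#W(\F_{q^r})=q^{2r}+O(q^{3r/2})$ for every $r$ by Lang--Weil, whereas the trivial locus meets $V$ in a curve and contributes only $O(q^r)$ points; hence for all large $r$ there would be a nontrivial point, contradicting planarity on infinitely many $\F_{q^r}$. (Components defined only over a proper extension $\F_{q^m}$ are harmless, because for $m\nmid r$ their Galois orbit carries few $\F_{q^r}$-points.) Thus exceptional planarity forces $V$ to have \emph{no} $\F_q$-rational absolutely irreducible component off the trivial locus.

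Next I would exploit this at infinity. Writing $f=a_dX^d+(\text{lower terms})$, the closure of $V$ is governed by the leading form
\[
 h_0(x,\e,\delta)=a_d\cdot\frac{(x+\e+\delta)^d-(x+\e)^d-(x+\delta)^d+x^d}{\e\delta},
\]
a form of degree $d-2$. The hypothesis $d\not\equiv 0,1\pmod p$ is exactly the statement $p\nmid d(d-1)$, which forces the coefficient $a_d\,d(d-1)$ of $x^{d-2}$ in $h_0$ to be nonzero; hence $h_0$ is a genuine degree-$(d-2)$ form, divisible by neither $\e$ nor $\delta$. The standard lifting principle says that a simple $\F_q$-rational absolutely irreducible factor of $h_0$ avoiding the coordinate lines lifts, via a Bézout/implicit-function argument, to an $\F_q$-rational absolutely irreducible component of $V$ off the trivial locus, which is now forbidden. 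I would use this criterion twice: first to show that any nonzero term of $f$ of degree not a power of $p$ perturbs $h$ so as to create such a factor, forcing it to vanish; and second, after using EA-equivalence to normalize $f$ to be monic and to delete the terms of degree a power of $p$, to conclude that $f$ must in fact be the monomial $X^d$.

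Finally, once $f$ is EA-equivalent to a monic monomial of degree $d$ with $p\nmid d$ (which holds since $d\not\equiv 0$), Theorem~A applies and gives that $f$ is EA-equivalent to $X^{p^k+1}$ or to the characteristic-three monomial~\eqref{eqn:exceptional_2}. The remaining hypothesis $d\not\equiv 1\pmod p$ eliminates $X^{p^k+1}$ for $k\geq 1$, whose degree is $\equiv 1\pmod p$, leaving only $k=0$, that is $X^2$, together with the family~\eqref{eqn:exceptional_2}; this is exactly the claimed conclusion. I expect the main obstacle to lie in the middle step: carrying out the absolute-irreducibility bookkeeping for $h_0$, in particular producing a genuinely simple $\F_q$-rational absolutely irreducible factor and ensuring that the lower-order perturbations cannot conspire to keep the relevant factors multiple or defined only over an extension. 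These degenerate configurations are precisely where spurious non-examples or genuine exceptional examples could hide, so distinguishing them demands a careful analysis of the singularities of $V$ along the plane at infinity and of the descent of its components, rather than a purely numerical count of degrees.
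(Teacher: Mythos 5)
Before reviewing the substance: the paper does not actually prove Theorem~\ref{thm:polynomial_Zieve}; it is quoted from Zieve's unpublished preprint~\cite{Zie2013a}, so there is no in-paper proof to compare against. What can be said is that your steps leading up to the crux faithfully reproduce the paper's general machinery for its \emph{own} theorems: your surface $V$ is the paper's $F$ from~\eqref{def:poly_odd} up to a linear change of variables, your Lang--Weil argument is Lemma~\ref{lem:AIC_odd}, and your ``lifting principle'' is Lemma~\ref{lem:AMR} of Aubry--McGuire--Rodier. Those parts are sound (up to standard care about components lying in the trivial locus), as is your final step: once $f$ is known to be $X^d$ plus a linearized polynomial, adding a linearized polynomial changes $f(x+\e)-f(x)$ only by the constant $L(\e)$, so $X^d$ is itself exceptional planar, Theorem~\ref{thm:monomial_odd} applies, and $d\not\equiv 1\pmod p$ kills $X^{p^k+1}$ for $k\ge 1$, leaving $X^2$ and~\eqref{eqn:exceptional_2}.

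The genuine gap is your middle step, and it is fatal as formulated. The leading form
\[
h_0(x,\e,\delta)=a_d\,\frac{(x+\e+\delta)^d-(x+\e)^d-(x+\delta)^d+x^d}{\e\delta}
\]
depends only on $a_d$ and $d$: a term $a_jX^j$ of $f$ with $j<d$ contributes a homogeneous piece of degree $j-2<d-2$ to $h$ and is completely invisible in $h_0$. Consequently no criterion about factors of $h_0$ can ``force a nonzero term of $f$ of degree not a power of $p$ to vanish'' --- perturbing $f$ below the top degree does not perturb $h_0$ at all, so your stated mechanism cannot detect, let alone exclude, such terms. Yet this reduction of $f$ to a monomial is precisely the hard content of the theorem; a sanity check is example~\eqref{eqn:exceptional_3}, an exceptional planar polynomial with lower-order terms of degrees $6$ and $2$ (not powers of $3$), showing that such terms \emph{can} coexist with exceptional planarity when $d\equiv 1\pmod p$, so any argument excluding them under $d\not\equiv 0,1\pmod p$ must couple that hypothesis to the lower-degree structure of $h$, which the section at infinity alone cannot do. The paper's own treatment of lower-order terms (Proposition~\ref{pro:main_22}) illustrates what is actually required: one assumes a factorization $G=(P_s+\cdots+P_0)(Q_t+\cdots+Q_0)$ and compares homogeneous components degree by degree, using the coprimality and square-freeness facts of Lemmas~\ref{lem:phi_j} and~\ref{lem:phi_pk1_square-free} --- an argument on the whole surface, not on $h_0$. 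Finally, note that even your intended first use of the criterion (restricting $d$ itself) presupposes knowing when $\phi_d$ has a reduced absolutely irreducible factor over $\F_q$, which is the substance of the Leducq--Zieve monomial analysis that you only invoke at the very end, after the reduction that the broken middle step was supposed to deliver.
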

\par
Theorem~\ref{thm:polynomial_Zieve} allows us to restrict ourselves to polynomials over $\F_{p^n}$ whose degrees are congruent to $0$ or $1$ modulo $p$. We prove the following result for the case that the degree is congruent to $1$ modulo $p$.
\begin{theorem}
\label{thm:main_2}
Let $f\in \F_{p^n}[X]$ be monic of degree $d$. If $f$ is exceptional planar and $d\equiv 1 \pmod{p}$, then $f(X)=X^{p^k+1}+h(X)$ for some positive integer $k$, where the degree $e$ of $h$ satisfies $e<p^k+1$ and either $p\mid e$ or $p\mid e-1$.
\end{theorem}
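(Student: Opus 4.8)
The plan is to translate planarity into the absence of rational points on an algebraic surface and then to combine Lang--Weil with a leading-form (curve-at-infinity) analysis. To $f$ I associate the polynomial
\[
F(X,Y,Z)=\frac{f(X+Z)-f(X)-f(Y+Z)+f(Y)}{(X-Y)\,Z}\in\F_{p^n}[X,Y,Z],
\]
which has total degree $d-2$; here I use that the numerator vanishes on $X=Y$ and on $Z=0$, so the quotient is a polynomial. The point is that $f$ fails to be planar on $\F_{p^{nr}}$ exactly when $F$ has an $\F_{p^{nr}}$-point $(a,b,c)$ with $a\neq b$ and $c\neq 0$, since then $f(a+c)-f(a)=f(b+c)-f(b)$ exhibits a collision for the difference map. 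Hence if $F$ had an absolutely irreducible factor defined over $\F_{p^n}$, then by Lang--Weil its zero set would carry $\sim p^{2nr}$ points over $\F_{p^{nr}}$ for all large $r$, of which only $O(p^{nr})$ can lie on the one-dimensional locus $(X-Y)Z=0$; this would produce collisions for all large $r$, contradicting exceptional planarity. I therefore record the basic principle: \emph{if $f$ is exceptional planar, then $F$ has no absolutely irreducible factor over $\F_{p^n}$.}

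Next I pass to the homogeneous leading form $F_{d-2}$ of $F$ and invoke a lifting lemma: a multiplicity-one absolutely irreducible factor of $F_{d-2}$ over $\F_{p^n}$ would lift to an absolutely irreducible factor of $F$ over $\F_{p^n}$, so by the principle above $F_{d-2}$ has no simple absolutely irreducible $\F_{p^n}$-factor. Now $F_{d-2}$ is precisely the surface attached to the monomial $X^d$, namely $F_{d-2}=\frac{(X+Z)^d-X^d-(Y+Z)^d+Y^d}{(X-Y)Z}$, so this is a statement about the factorization of the difference surface of $X^d$. Applying the same point-counting principle to the monomial (for which $F$ equals $F_{d-2}$), this is essentially equivalent to $X^d$ being exceptional planar, so Theorem~\ref{thm:monomial_odd} applies; its sporadic family $X^{(3^k+1)/2}$ is excluded because its degree is $\equiv 2\pmod 3$, while our hypothesis gives $d\equiv 1\pmod p$. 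I thus expect to force $d=p^k+1$. For this value one computes $F_{d-2}=(X-Y)^{p^k-1}+Z^{p^k-1}=\prod_{\omega^{p^k-1}=-1}\big((X-Y)-\omega Z\big)$, a product of $p^k-1$ distinct lines, all passing through the single point $Q=[1:1:0]$ at infinity.

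For the finer assertion on $e=\deg h$, I write $f=X^{p^k+1}+c_eX^e+(\text{lower order})$ with $c_e\neq 0$. The terms of $f$ of degree a power of $p$ contribute nothing to $F$, so its homogeneous expansion reads $F=F_{d-2}+\dots+c_e\,G_e+\dots$, where $G_e=\frac{(X+Z)^e-X^e-(Y+Z)^e+Y^e}{(X-Y)Z}$ is the next nonzero form and is nonzero precisely when $e$ is not a power of $p$. Assuming $e\not\equiv 0,1\pmod p$, so in particular $G_e\neq 0$, I analyze the projective closure of $\{F=0\}$ near the singular point $Q$ of the curve at infinity, through which every line of $F_{d-2}$ passes. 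A local Newton-polygon (Puiseux) computation there, driven by the exponent $e$, is designed to separate a branch yielding a simple absolutely irreducible factor of $F$ defined over $\F_{p^n}$ (indeed over the prime field), contradicting the principle; the arithmetic condition $p\mid e$ or $p\mid e-1$ is exactly what prevents such a branch from appearing.

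The main obstacle is twofold and lies in the two lifting steps. First, the lifting lemma requires \emph{simple} absolutely irreducible factors, whereas both $F_{d-2}$ and the local data at $Q$ may a priori carry absolutely irreducible components only with higher multiplicity; these degenerate configurations must be excluded or treated separately, and it is here that the gap between ``no absolutely irreducible factor'' and ``no simple absolutely irreducible factor'' must be reconciled. Second, everything has to be controlled over the \emph{fixed} ground field $\F_{p^n}$ rather than over $\overline{\F_{p^n}}$: one cannot freely enlarge the base field, since planarity over $\F_{p^n}$ need not persist along a prescribed subsequence of extensions, so the Frobenius action on the absolutely irreducible factors must be tracked throughout and the sought-after simple factor must be shown to be defined over $\F_{p^n}$ — for which producing a factor already over the prime field is the cleanest route. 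It is precisely in locating these rational simple factors that the constraints $d=p^k+1$ and $e\equiv 0,1\pmod p$ crystallize.
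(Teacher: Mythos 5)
Your overall framework (the Lang--Weil principle plus the hyperplane-at-infinity lemma of Aubry--McGuire--Rodier, which is your ``lifting lemma'') is the same as the paper's, but both halves of your argument contain genuine gaps. The first is a logical reversal in your reduction to the monomial case. You claim that ``$F_{d-2}$ has no simple absolutely irreducible $\F_{p^n}$-factor'' is essentially equivalent to $X^d$ being exceptional planar, so that Theorem~\ref{thm:monomial_odd} can be cited as a black box. The implication only goes one way: an absolutely irreducible factor over the ground field (not supported on the degenerate locus) forces non-planarity for all large $r$, but ``not exceptional planar'' does \emph{not} produce such a factor --- the surface could, for instance, have absolutely irreducible components defined only over a proper extension of $\F_{p^n}$, which already destroys planarity for a cofinite set of $r$ while yielding no rational factor at all. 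What your argument actually needs is the converse-type statement: for $d\equiv 1\pmod p$ and $d\neq p^k+1$, the form $\phi_d$ has a \emph{reduced} absolutely irreducible factor over $\F_p$. This is exactly the intermediate content of Leducq's proof (absolute irreducibility of a factor of $\phi_d(U+W,U,V+W)$, together with finiteness of its singular locus to exclude multiple components), which the paper cites explicitly and which cannot be recovered from the statement of Theorem~\ref{thm:monomial_odd}. You also overlook the family $d=p^i(p^i-1)+1\equiv 1\pmod p$, to which Leducq's result does not apply and which the paper handles separately: such $d$ is odd, and then $Y+Z$ is itself a reduced absolutely irreducible component of $\phi_d$.

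The second gap is that your treatment of the constraint on $e$ is a plan, not a proof. You propose a Newton-polygon/Puiseux analysis at the common singular point $Q=[1:1:0]$ of the $p^k-1$ lines, ``designed to'' produce a simple rational factor when $e\not\equiv 0,1\pmod p$, and you yourself name the two obstructions (multiplicity and field of definition) without resolving either. The paper's actual argument here is different and purely algebraic, following Delgado--Janwa: assuming $G$ factors nontrivially as $(P_s+\cdots+P_0)(Q_t+\cdots+Q_0)$, one compares homogeneous parts degree by degree, using that $\phi_{p^k+1}$ is square-free (so $P_s$ and $Q_t$ are coprime) and that $a_j=0$ for $e<j\le p^k$, to propagate $P_{s-1}=Q_{t-1}=\cdots=0$ down to the level of $\phi_e$; there, the coprimality of $\phi_e(X,X,Z)$ and $\phi_{p^k+1}(X,X,Z)=(p^k+1)(X-Z)^{p^k-1}$ --- valid precisely when $p\nmid e$ and $p\nmid e-1$ --- yields the contradiction. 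Nothing in your sketch substitutes for this computation. A smaller issue: your ``basic principle'' needs the caveat that $F$ is not divisible by $X-Y$ or by $Z$ (otherwise all the Lang--Weil points may lie on the degenerate locus); the paper proves this non-divisibility using that $f'$ is nonconstant.
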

\par
We remark that, except for the trivial case $e=1$, no example is known for which $p\mid e-1$ occurs in Theorem~\ref{thm:main_2}. A nontrivial example for which $p\mid e$ occurs in Theorem~\ref{thm:main_2} is~\eqref{eqn:exceptional_3}.
\par
We now turn to finite fields of characteristic two, in which case the only known examples of exceptional planar polynomials are the polynomials in which the degree of every nonconstant term is a power of two (it is trivial to check that such polynomials are exceptional planar). Indeed, M\"uller and Zieve~\cite{MulZie2013} established the following classification of exceptional planar monomials.
\begin{atheorem}[{M\"uller and Zieve~\cite{MulZie2013}}]
\label{thm:monomial_even}
Let $f\in \F_{2^n}[X]$ be a monomial of degree $d$. If $f$ is exceptional planar, then $d$ is a power of~$2$.
\end{atheorem}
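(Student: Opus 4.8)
I will prove that an exceptional planar monomial over $\F_{2^n}$ has degree a power of~$2$ (Theorem~\ref{thm:monomial_even}) by encoding planarity as the absence of a suitable absolutely irreducible component and reading the constraint on $d$ off the geometry at infinity. Write $f=aX^{d}$ with $a\in\F_q^{\times}$, $q=2^n$; the scalar plays no role below, so I suppress it. For $\e\neq0$ the map $x\mapsto f(x+\e)+f(x)+\e x$ fails to be injective on $\F_{q^r}$ exactly when there are $x\neq y$ with $(x+\e)^d+x^d+(y+\e)^d+y^d+\e(x+y)=0$. Dividing by $X+Y$ (legitimate off the diagonal, and $X+Y$ divides the first four terms, which vanish at $X=Y$), this is governed by the surface $S\subset\mathbb{A}^3$ cut out by
\[
G(X,Y,Z)=\frac{(X+Z)^d+X^d+(Y+Z)^d+Y^d}{X+Y}+Z .
\]
Thus $f$ is planar on $\F_{q^r}$ iff $S$ has no $\F_{q^r}$-point with $X\neq Y$ and $Z\neq0$. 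By Lang--Weil, an absolutely irreducible component of $S$ defined over $\F_q$ and not contained in the trivial loci $\{X=Y\}\cup\{Z=0\}$ would supply forbidden points for every large $r$; since $f$ is planar for infinitely many $r$, no such component exists.

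The case $d=2^{k}$ is transparent: then $(X+Z)^{d}+X^{d}=Z^{d}$ and likewise in $Y$, so the numerator equals $2Z^{d}=0$, $G=Z$, and $S=\{Z=0\}$ has no admissible point — recovering that $X^{2^{k}}$ is planar. Assume now that $d$ is \emph{not} a power of $2$, so $\deg G=d-1\ge2$. The numerator is homogeneous of degree $d$ and vanishes on $Z=0$, so the leading form is $G_{d-1}=\bigl((X+Z)^d+X^d+(Y+Z)^d+Y^d\bigr)/(X+Y)$ and $Z\mid G_{d-1}$; write $G_{d-1}=Z\cdot H$. The key point is that $X+Y+Z$ always divides $G_{d-1}$: substituting $Z=X+Y$ turns $X+Z$ into $Y$ and $Y+Z$ into $X$, so the numerator collapses to $Y^{d}+X^{d}+X^{d}+Y^{d}=0$. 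As $X+Y$, $Z$, $X+Y+Z$ are distinct linear forms, it follows that $X+Y+Z\mid H$, a factor lying off the trivial loci. By the usual lifting principle (a reduced absolutely irreducible $\F_q$-component of the section of $\overline S$ at infinity lifts to one of $\overline S$), it suffices to show that this factor is \emph{reduced}. Setting $W=X+Y+Z$ and expanding the numerator in $W$ gives $d\,(X^{d-1}+Y^{d-1})\,W+O(W^{2})$; when $d$ is odd the coefficient is the nonzero polynomial $(X^{d-1}+Y^{d-1})/(X+Y)$, so $W$ divides $G_{d-1}$ exactly once and $X+Y+Z$ is reduced, producing an absolutely irreducible $\F_q$-component off the trivial loci — a contradiction. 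Since the only odd power of $2$ is $1$, this disposes of every odd $d\ge3$.

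The even case is the main obstacle. When $d$ is even the displayed coefficient vanishes ($d=0$ in $\F_2$), so $G_{d-1}$ vanishes to order $\ge2$ along $X+Y+Z=0$ and the factor is no longer reduced; for instance $d=6$ yields $H=(X+Y)\,Z\,(X+Y+Z)^{2}$, whose factors all lie on the trivial loci or are non-reduced, so the leading form alone yields nothing. Here I would pass to the full surface: for $d=6$ one has $G=Z\bigl[(X+Y)Z(X+Y+Z)^{2}+1\bigr]$, and the bracket depends only on $w=X+Y$ and $Z$; after the substitutions $s=X+Y+Z$, $w=sv$ and two Artin--Schreier reductions it becomes birational to $\xi^{2}+\xi=s^{-1}$, whose right-hand side has a pole of odd order, hence is absolutely irreducible over $\F_2$ — giving a genuine component off the trivial loci, so $X^{6}$ is not planar. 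For general even $d$ I would systematize this in one of two ways: run an Artin--Schreier/absolute-irreducibility analysis of the residual surface $G/Z$, or exploit the Frobenius identity $(X+Z)^{2e}+X^{2e}=\bigl((X+Z)^{e}+X^{e}\bigr)^{2}$, which makes the exponent-$2e$ numerator a perfect square, to relate $d=2^{a}m$ to its odd part $m$ and descend to the odd case already settled (the additive term $+Z$ breaking the square being exactly what forces a reduced component). Proving absolute irreducibility of the relevant residual component uniformly in $d$ — precisely where the leading-form method breaks down because $2\mid d$ — is the crux of the argument.
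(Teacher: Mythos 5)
Your proposal does not prove the stated theorem, and you concede this yourself: for even $d$ you offer only a worked example ($d=6$) and two candidate strategies, ending with the admission that the uniform absolute-irreducibility statement is ``the crux of the argument.'' That crux is precisely what is missing, and it is not a routine gap. Note first that this paper does not prove Theorem~\ref{thm:monomial_even} either: the theorem is cited from M\"uller and Zieve~\cite{MulZie2013}, whose proof uses entirely different techniques. What the paper proves with the method you are using (hyperplane section at infinity plus Lemma~\ref{lem:AMR} plus Lang--Weil) is only the weaker Theorem~\ref{thm:main_3}, namely $d\in\{1,2\}$ or $4\mid d$ --- and the reason it stops there is exactly the obstruction you ran into. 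Writing $d=2^a m$ with $m>1$ odd, one has in characteristic $2$
\[
\phi_d=\phi_m^{2^a}\,\bigl((X+Y)(X+Z)\bigr)^{2^a-1},
\]
so for $a\ge 2$ \emph{every} irreducible factor of the section at infinity occurs with multiplicity at least two, and Lemma~\ref{lem:AMR} has no reduced component to act on. Your proposed Frobenius descent to the odd part founders on the same point: the low-degree term that ``breaks the square'' does not survive passage to the leading form, and making the residual surface argument work uniformly in $d$ is the genuinely hard content of~\cite{MulZie2013}, not a step one can wave at.

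There are also two flaws in the part you did write out. First, your $G$ is divisible by $Z$ (indeed $G=Z\cdot F$, where $F$ is, up to the change of coordinates, the paper's polynomial $H$ in~\eqref{eqn:def_poly_even}, which carries the ``$+1$''). Hence $\{Z=0\}$ is always an absolutely irreducible $\F_q$-component of your surface, so the conclusion of Lemma~\ref{lem:AMR} holds vacuously and yields no contradiction: the lemma does not let you choose \emph{which} component you obtain, in particular not one off the trivial loci. You must run the argument on $F=G/Z$, and you must then separately prove that $F$ is not divisible by $X+Y$ or $X+Z$; the paper's Lemma~\ref{lem:AIC_even} does this by a derivative argument (divisibility would force $f'(X)=X+c$, impossible in characteristic two since $f'$ has no linear term), a check your proposal omits entirely. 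Second, your insistence that the reduced component at infinity lie off the trivial loci is unnecessary, and it costs you the case $d\equiv 2\pmod 4$: for $d=2e$ with $e$ odd, the paper uses exactly the ``trivial'' factor $X+Y$ of $\phi_d=\phi_e^2\,(X+Y)(X+Z)$, which is reduced because $X+Y\nmid\phi_e$. Reducedness, absolute irreducibility and rationality are all that Lemma~\ref{lem:AMR} requires; non-triviality is checked on the affine polynomial itself, not at infinity. So your $d=6$ impasse (dismissing $H=(X+Y)\,Z\,(X+Y+Z)^2$ as yielding nothing) is self-inflicted --- the method does settle all $d\equiv 2\pmod 4$. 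Even with both repairs, however, the argument stalls at $4\mid d$, and the theorem as stated remains unproved.
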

\par
The case that $d$ is odd in Theorem~\ref{thm:monomial_even} was obtained previously by Schmidt and Zhou~\cite{SchZho2013} using different techniques. 
\par
We prove the following partial classification of exceptional planar polynomials.
\begin{theorem}
\label{thm:main_3}
Let $f\in\F_{2^n}[X]$ be of degree $d$. If $f$ is exceptional planar, then either $d\in\{1,2\}$ or $4\mid d$.
\end{theorem}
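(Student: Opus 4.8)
The plan is to convert planarity into a statement about $\F_{q^r}$-points of an algebraic surface and then apply the Lang--Weil bound, writing $q=2^n$. In characteristic two, $f$ fails to be planar on $\F_{q^r}$ exactly when the map $\phi_t(X)=f(X+t)+f(X)+tX$ is non-injective for some nonzero $t\in\F_{q^r}$, i.e.\ when
\[
P(X,Y,T):=f(X+T)+f(X)+f(Y+T)+f(Y)+T(X+Y)
\]
has an $\F_{q^r}$-zero $(x,y,t)$ with $x\neq y$ and $t\neq 0$. Since $P$ vanishes identically on $X=Y$ and on $T=0$, we have $(X+Y)T\mid P$; set $\Psi=P/[(X+Y)T]$ and let $W\subset\mathbb{A}^3$ be the surface $\Psi=0$. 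A short computation with formal derivatives shows $(X+Y)T$ divides $P$ exactly once, so $T\nmid\Psi$ and $(X+Y)\nmid\Psi$; hence no component of $W$ is contained in the \emph{bad locus} $\{X=Y\}\cup\{T=0\}$, and every $\F_{q^r}$-point of $W$ off this locus (having both $x\neq y$ and $t\neq 0$) witnesses a failure of planarity on $\F_{q^r}$.

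First I would reduce to an irreducibility statement. If $\Psi$ has an absolutely irreducible factor defined over $\F_q$, then the corresponding component of $W$ is an absolutely irreducible surface over $\F_q$ lying off the bad locus, so by Lang--Weil it has $q^{2r}+O(q^{3r/2})$ rational points over $\F_{q^r}$, while its intersection with the bad locus contributes only $O(q^{r})$. Thus for all large $r$ there is a witness point, contradicting planarity on $\F_{q^r}$ for the infinitely many (hence arbitrarily large) $r$ guaranteed by exceptional planarity. It therefore suffices to prove that $\Psi$ has an absolutely irreducible factor over $\F_q$ whenever $d\geq 3$ and $4\nmid d$. For this I would invoke the standard lifting lemma: if the leading (top-degree homogeneous) form of $\Psi$ has a multiplicity-one absolutely irreducible factor that is defined over $\F_q$, then $\Psi$ itself has an absolutely irreducible factor defined over $\F_q$.

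The heart of the argument is the leading form. Because lower-order terms of $f$ contribute only lower-order terms to $P$, the leading form of $\Psi$ depends solely on the top term $X^d$ of $f$; thus the analysis is identical to the monomial case. Writing $d=2^a m$ with $m$ odd and using $u^{2^a}+v^{2^a}=(u+v)^{2^a}$, one computes that the degree-$d$ part of $P$ is $(Q(X,T)+Q(Y,T))^{2^a}$ with $Q(X,T)=(X+T)^m+X^m$, that $Q(X,T)+Q(Y,T)=T(X+Y)S_0$ for an explicit form $S_0$, and hence that the leading form of $\Psi$ is
\[
T^{\,2^a-1}\,(X+Y)^{\,2^a-1}\,S_0^{\,2^a}.
\]
The decisive observation is that $(X+Y+T)$ divides $S_0$ (since $P$ vanishes on $T=X+Y$). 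For $d$ odd ($a=0$) the leading form equals $S_0$, which has the $\F_q$-rational, multiplicity-one linear factor $X+Y+T$; for $d\equiv 2\pmod 4$ ($a=1$) the leading form $T(X+Y)S_0^{2}$ has the $\F_q$-rational, multiplicity-one linear factor $T$. In either case the lifting lemma produces the required absolutely irreducible factor of $\Psi$, yielding the desired contradiction.

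The main obstacle is the multiplicity bookkeeping that isolates exactly the cases $a\in\{0,1\}$, i.e.\ $4\nmid d$. One must compute the leading form precisely and verify the multiplicity-one assertions: that $X+Y+T$ divides $S_0$ but not $S_0^2$, via a derivative computation on the hyperplane $X+Y+T=0$, and that $T$ divides the leading form but not $S_0$. This is where the threshold appears: when $4\mid d$ ($a\geq 2$) every candidate factor of the leading form---including $T$, $X+Y$, and $X+Y+T$---acquires multiplicity at least $2^a-1\geq 3$, so no multiplicity-one factor remains, the lifting lemma does not apply, and the method correctly yields no contradiction, consistent with the degree-divisible-by-$4$ examples being genuinely planar. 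A secondary point requiring care is the descent to $\F_q$: one must ensure the absolutely irreducible factor obtained is genuinely $\F_q$-rational, which is exactly guaranteed by the $\F_q$-rationality of the simple leading-form factor through a Frobenius-orbit (uniqueness) argument already built into the lifting lemma.
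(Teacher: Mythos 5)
Your proof is correct and is essentially the paper's own argument: your Lang--Weil reduction is Lemma~\ref{lem:AIC_even}, your ``lifting lemma'' is Lemma~\ref{lem:AMR} applied to the hyperplane at infinity, and after the paper's change of variable $Z=X+T$ your multiplicity-one linear factors $X+Y+T$ (for $d$ odd) and $T$ (for $d\equiv 2\pmod 4$) are exactly the paper's $Y+Z$ and $X+Z$ (the symmetric twin of its $X+Y$), with your uniform bookkeeping via $d=2^a m$ matching the paper's factorization $\phi_d=\phi_e^2(X+Y)(X+Z)$. One parenthetical justification is false and should be repaired: $P$ does \emph{not} vanish on $T=X+Y$ --- there $P=(X+Y)^2$ because of the extra $T(X+Y)$ term --- but the degree-$d$ homogeneous part of $P$ does vanish on that plane, which is all you need to conclude $X+Y+T\mid S_0$, so the slip is harmless.
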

\par
To prove our main results, we use an approach that has been used to classify polynomials in $\F_{2^n}[X]$ that induce APN functions on infinitely many extensions of $\F_{2^n}$~\cite{Rod2009}, \cite{AubMcGRod2010}, \cite{DelJan2012}, \cite{Cau2013}.
\par
Let $f\in\F_q[X]$ and define the polynomial $F(X,Y,W)$ to be
\begin{equation}
\frac{f(X+W)-f(X)-f(Y+W)+f(Y)}{(X-Y)W}   \label{def:poly_odd}
\end{equation}
when $q$ is odd, and
\begin{equation}
\frac{f(X+W)+f(X)+WX+f(Y+W)+f(Y)+WY}{(X+Y)W}   \label{def:poly_even}
\end{equation}
when $q$ is even. It is a direct consequence of the definition of planar functions that, if $f$ induces a planar function on $\F_{q^r}$, then all $\F_{q^r}$-rational zeros of $F$ satisfy $X=Y$ or $W=0$. The strategy is to show that $F$ has an absolutely irreducible factor over $\F_q$, since then, for all sufficiently large $r$, the polynomial $F$ has many $\F_{q^r}$-rational zeros by the Lang-Weil bound~\cite{LanWei1954}, so that $f$ cannot be planar on $\F_{q^r}$. To do so, we use the key idea of~\cite{AubMcGRod2010} and intersect the projective surface defined by $F$ with a hyperplane and then apply the following result (in which $\overline{\F}_q$ denotes the algebraic closure of $\F_q$).
\begin{lemma}[{Aubry, McGuire, Rodier~\cite[Lemma 2.1]{AubMcGRod2010}}]
\label{lem:AMR}
Let $F$ and $P$ be projective surfaces in $\P^3(\overline{\F}_q)$ defined over $\F_q$. If $F\cap P$ has a reduced absolutely irreducible component defined over $\F_q$, then $F$ has an absolutely irreducible component defined over~$\F_q$.
\end{lemma}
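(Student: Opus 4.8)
The plan is to argue by contraposition: assuming that the surface $F$ has no absolutely irreducible component defined over $\F_q$, I will show that $F\cap P$ admits no reduced absolutely irreducible component defined over $\F_q$. If a purported component $C$ of $F\cap P$ had dimension $2$, then it would already be a component of the surface $F$ defined over $\F_q$, so I may assume $\dim C=1$; and since $F$ and $P$ are hypersurfaces in $\P^3$, every component of $F\cap P$ has dimension at least $1$, so $C$ is a curve. First I would factor the defining polynomial of $F$ into absolutely irreducible factors over $\overline{\F}_q$ and consider the corresponding irreducible components $F_1,\dots,F_m$ of the surface. As $F$ is defined over $\F_q$, the Galois group $\Gamma=\mathrm{Gal}(\overline{\F}_q/\F_q)$ permutes the $F_i$. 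The standing assumption forces every $\Gamma$-orbit to have size at least two, for a $\Gamma$-fixed component $F_i$ is cut out by a polynomial that is $\Gamma$-invariant up to a scalar and hence, after rescaling (Galois descent for the one-dimensional space of its defining equations), can be taken with coefficients in $\F_q$, contrary to assumption.

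Next I would locate $C$ within this decomposition. Because $C$ is absolutely irreducible and contained in the union $F=\bigcup_i F_i$, it lies in a single component, say $C\subseteq F_{i_0}$. Since $C$ is defined over $\F_q$ it is $\Gamma$-stable, so for every $\sigma\in\Gamma$ we get $C=\sigma(C)\subseteq\sigma(F_{i_0})=F_{\sigma(i_0)}$. Choosing $\sigma$ that moves $i_0$ inside its orbit, which is possible as the orbit has size at least two, produces a second component $F_{i_1}$ with $i_1\neq i_0$ and $C\subseteq F_{i_1}$. Thus the defining polynomial of $F$ is divisible by the product of the two distinct irreducible polynomials defining $F_{i_0}$ and $F_{i_1}$, each of which vanishes along $C$.

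The heart of the proof is a local computation at the generic point $\eta$ of $C$, and this is the step I expect to be the main obstacle, since it is exactly where reducedness is used and where one must track scheme structure rather than point sets. As $C$ has codimension $2$ in the smooth variety $\P^3$, the local ring $\mathcal{O}_{\P^3,\eta}$ is regular of dimension $2$, so its cotangent space $\mathfrak{m}_\eta/\mathfrak{m}_\eta^2$ is two-dimensional over the residue field. The images of the defining polynomials of $F_{i_0}$ and $F_{i_1}$ both lie in $\mathfrak{m}_\eta$, whence the image of $F$ lies in $\mathfrak{m}_\eta^2$; on the other hand $C\subseteq P$ gives that the image of $P$ lies in $\mathfrak{m}_\eta$. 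Consequently the image of the ideal $(F,P)$ in $\mathfrak{m}_\eta/\mathfrak{m}_\eta^2$ is spanned by the class of $P$ alone and is therefore at most one-dimensional, so $(F,P)$ is a proper subideal of $\mathfrak{m}_\eta$. Hence the Artinian local ring $\mathcal{O}_{\P^3,\eta}/(F,P)$ is not a field and $F\cap P$ is non-reduced along $C$, contradicting the hypothesis that $C$ is a reduced component. This would complete the contrapositive, and with it the proof.
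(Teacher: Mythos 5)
A point of order first: the paper does not prove this lemma at all---it is imported verbatim from Aubry, McGuire and Rodier~\cite[Lemma 2.1]{AubMcGRod2010}, so there is no internal proof to compare against, and your proposal has to be measured against the original source. Measured that way, your proof is correct and is essentially the Aubry--McGuire--Rodier argument written in contrapositive form. The two pillars are the same as theirs: (i) a Galois-stable absolutely irreducible component of a hypersurface defined over $\F_q$ descends to $\F_q$ (your rescaling of the defining polynomial), so under your standing assumption every Galois orbit of components of $F$ has size at least two; (ii) if the curve $C$ lay on two distinct components of $F$, then the defining polynomial of $F$ would lie in $\mathfrak{m}_\eta^2$ at the generic point $\eta$ of $C$, forcing the Artinian local ring $\mathcal{O}_{\P^3,\eta}/(F,P)$ to have nonzero (nilpotent) maximal ideal, contradicting reducedness of the component. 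In the direct form used by the source, (ii) shows that $C$ lies on a \emph{unique} absolutely irreducible component $F_{i_0}$ of $F$, and Galois stability of $C$ then forces $\sigma(F_{i_0})=F_{i_0}$ for every $\sigma$, whence $F_{i_0}$ is defined over $\F_q$ by (i); your contrapositive merely reorders these steps. Your preliminary reductions (a dimension-$2$ component of $F\cap P$ would itself be a component of $F$ defined over $\F_q$; Krull's theorem gives dimension at least $1$ for every component) are also needed and correctly dispatched. The only cosmetic caveat is that the local computation should formally be carried out in an affine chart containing $\eta$ after dehomogenizing the defining equations, but that is routine.
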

\par
We emphasise that the proof of Theorem~\ref{thm:main_2} relies on intermediate results used to prove Theorem~\ref{thm:monomial_odd}, whereas the proof of Theorem~\ref{thm:main_3} is self-contained, except for the use of the Lang-Weil bound~\cite{LanWei1954} and Lemma~\ref{lem:AMR}.


\section{The odd characteristic case}

In this section we prove Theorem~\ref{thm:main_2}. Let $q$ be an odd prime power, let $f\in\F_q[X]$, and let $F(X,Y,W)$ be the polynomial defined by~\eqref{def:poly_odd}. It will be more convenient to consider the polynomial $F(X,Y,Z-X)$, namely
\begin{equation}
G(X,Y,Z)=\frac{f(X)-f(Y)-f(Z)+f(-X+Y+Z)}{(X-Y)(X-Z)}.   \label{eqn:def_poly_odd}
\end{equation}
Then $f$ induces a planar function on $\F_{q^r}$ if and only if all $\F_{q^r}$-rational zeros of $G$ satisfy $X=Y$ or $X=Z$.
\begin{lemma}
\label{lem:AIC_odd}
Let $q$ be an odd prime power, let $f\in\F_q[X]$ be a polynomial such that $f'$ is nonconstant, and let $G$ be defined by~\eqref{eqn:def_poly_odd}. If $G$ has an absolutely irreducible factor over $\F_q$, then $f$ is not exceptional planar.
\end{lemma}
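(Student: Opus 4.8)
The plan is to interpret the zeros of $G$ geometrically and to count $\F_{q^r}$-rational points by means of the Lang--Weil bound, arguing that for every large $r$ there must exist a zero $(x,y,z)$ of $G$ with $x\ne y$ and $x\ne z$, which by the criterion recalled just before the lemma prevents $f$ from being planar on $\F_{q^r}$. Let $H\in\F_q[X,Y,Z]$ be an absolutely irreducible factor of $G$, and let $V(H)$ denote the affine surface defined by $H=0$. Since $H$ is nonconstant and absolutely irreducible, $V(H)$ is absolutely irreducible of dimension $2$ and defined over $\F_q$, so the Lang--Weil bound yields
\[
\#\{(x,y,z)\in\F_{q^r}^{\,3}:H(x,y,z)=0\}=q^{2r}+O(q^{3r/2}).
\]
In particular $V(H)$ carries on the order of $q^{2r}$ rational points over $\F_{q^r}$ for all sufficiently large $r$.

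The essential point is to confine the forbidden zeros, those with $X=Y$ or $X=Z$, to a locus of strictly smaller dimension. For this I would first show that neither $X-Y$ nor $X-Z$ divides $G$, which is precisely where the hypothesis that $f'$ is nonconstant enters. Writing $N(X,Y,Z)=f(X)-f(Y)-f(Z)+f(-X+Y+Z)$ for the numerator in~\eqref{eqn:def_poly_odd}, a direct computation gives $N|_{Y=X}=0$ and $(\partial N/\partial Y)|_{Y=X}=-f'(X)+f'(Z)$; the latter is a nonzero polynomial exactly because $f'$ is nonconstant. Hence $(X-Y)^2\nmid N$, so $(X-Y)\nmid G$, and by the symmetry $Y\leftrightarrow Z$ of $G$ also $(X-Z)\nmid G$. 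Consequently the absolutely irreducible factor $H$ is not a scalar multiple of $X-Y$ or of $X-Z$, so the irreducible surface $V(H)$ is contained in neither plane $\{X=Y\}$ nor $\{X=Z\}$. Each intersection $V(H)\cap\{X=Y\}$ and $V(H)\cap\{X=Z\}$ is therefore a proper closed subset of $V(H)$, of dimension at most $1$, and contributes only $O(q^r)$ rational points over $\F_{q^r}$.

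Comparing the two counts, for all sufficiently large $r$ there is a point $(x,y,z)\in\F_{q^r}^{\,3}$ with $H(x,y,z)=0$, hence $G(x,y,z)=0$, and $x\ne y$, $x\ne z$. Such a zero violates the criterion stated just before the lemma, so $f$ does not induce a planar function on $\F_{q^r}$; thus $f$ is planar on only finitely many extensions of $\F_q$ and is not exceptional planar. I expect the main technical obstacle to lie in making the point count fully rigorous: one should pass to the projective closure to apply Lang--Weil, bound the contribution of the points at infinity (a curve, hence $O(q^r)$ points), and justify the dimension drop for the two plane sections. The conceptual heart, by contrast, is the short derivative computation that rules out $X-Y$ and $X-Z$ as factors of $G$ and thereby forces the forbidden zeros into a one-dimensional set.
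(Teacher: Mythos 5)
Your proposal is correct and follows essentially the same route as the paper: apply the Lang--Weil bound to an absolutely irreducible factor of $G$, and rule out $X-Y$ and $X-Z$ as factors via the partial derivative of the numerator, which is exactly where the hypothesis that $f'$ is nonconstant enters. The only cosmetic difference is that you bound the ``bad'' zeros by a dimension argument on $V(H)\cap\{X=Y\}$, whereas the paper simply notes that $G(X,X,Z)$ and $G(X,Y,X)$ are nonzero bivariate polynomials and so have at most $q^r\deg(G)$ zeros.
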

\begin{proof}
Suppose that $G$ has an absolutely irreducible factor over $\F_q$. Then, by the Lang-Weil bound~\cite{LanWei1954} for the number of rational points in varieties over finite fields, the number of $\F_{q^r}$-rational zeros of $G$ is $q^{2r}+O(q^{3r/2})$, where the implicit constant depends only on the degree of $G$. We claim that $G$ is not divisible by $X-Y$ or $X-Z$. Then $G(X,X,Z)$ and $G(X,Y,X)$ are nonzero polynomials, which have at most $q^r\deg(G)$ zeros in $\F_{q^r}$ (see~\cite[Theorem~6.13]{LidNie1997}, for example). Hence, for all sufficiently large~$r$, the polynomial $G$ has $\F_{q^r}$-rational zeros that do not satisfy $X=Y$ or $X=Z$, and so $f$ does not induce a planar function on $\F_{q^r}$.
\par
It remains to prove the above claim. By symmetry, it is enough to show that $G$ is not divisible by $X-Y$. Suppose for a contradiction that $G$ is divisible by $X-Y$. Then the partial derivative of the numerator of~\eqref{eqn:def_poly_odd} with respect to $Y$, namely $f'(-X+Y+Z)-f'(Y)$, must be divisible by $X-Y$. This forces $f'$ to be a constant polynomial, a contradiction.
\end{proof}
\par
Our main result for finite fields of odd characteristic, Theorem~\ref{thm:main_2}, will follow from Propositions~\ref{pro:main_21} and~\ref{pro:main_22}, to be stated and proved below. Before we proceed, we introduce some notation that will be used throughout the remainder of this section. Let $d$ be the degree $f\in\F_q[X]$. Write $f(X)=\sum_{j=0}^da_jX^j$, where $a_d\ne 0$. Defining 
\begin{equation}
\phi_j(X,Y,Z)=\frac{X^j-Y^j-Z^j+(-X+Y+Z)^j}{(X-Y)(X-Z)},   \label{eqn:def_phi}
\end{equation}
we have
\begin{equation}
G(X,Y,Z)=\sum_{j=2}^da_j\phi_j(X,Y,Z).   \label{eqn:G_phi}
\end{equation}
since $\phi_0=\phi_1=0$. We shall also work with the homogeneous polynomial
\[
\widetilde{G}(X,Y,Z,T)=\sum_{j=2}^da_j\phi_j(X,Y,Z)\,T^{d-j}.
\]
\begin{proposition}
\label{pro:main_21}
Let $p$ be an odd prime and let $f\in \F_{p^n}[X]$ be of degree $d$. If $f$ is exceptional planar and $d\equiv 1 \pmod{p}$, then $d=p^k+1$ for some positive integer $k$.
\end{proposition}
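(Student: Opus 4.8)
The plan is to run the Lang--Weil/absolute-irreducibility strategy on the top-degree form of $G$ and then to import the analysis of $\phi_d$ developed for the monomial classification (Theorem~\ref{thm:monomial_odd}).

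First I would record the reduction. Since $d\equiv 1\pmod p$ we have $p\nmid d$, so $f'$ has degree $d-1\geq 1$ and is nonconstant; hence Lemma~\ref{lem:AIC_odd} applies, and because $f$ is exceptional planar it gives that $G$ has no absolutely irreducible factor over $\F_{p^n}$. Passing to the homogenisation $\widetilde G$, I intersect the surface $\widetilde G=0$ in $\P^3$ with the hyperplane at infinity $T=0$. Since the coefficient of $T^0$ in $\widetilde G$ is $a_d\phi_d$ with $a_d\neq 0$ and $\phi_d\neq 0$ (it has degree $d-2\geq 1$), the plane $T=0$ is not a component of the surface and the intersection $F\cap\{T=0\}$ is exactly the plane curve $\phi_d=0$. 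If this curve had a reduced absolutely irreducible component defined over $\F_{p^n}$, then Lemma~\ref{lem:AMR} would produce an absolutely irreducible component of $\widetilde G$ defined over $\F_{p^n}$; as $T\nmid\widetilde G$, dehomogenising it yields an absolutely irreducible factor of $G$ over $\F_{p^n}$, contradicting the previous sentence. Therefore exceptional planarity forces $\phi_d$ to have no reduced absolutely irreducible factor over $\F_{p^n}$.

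This is precisely the form that governs the monomial case, since for $f=X^d$ one has $G=a_d\phi_d$, so $\phi_d$ is the same object studied by Leducq and Zieve. I would invoke their intermediate results on the factorisation of $\phi_d$, which show that for $p\nmid d$ the curve $\phi_d=0$ has a reduced absolutely irreducible component defined over the prime field $\F_p$ unless $d=p^k+1$ for some $k$ or $d=(3^k+1)/2$ (the latter only in characteristic $3$). A component over $\F_p$ is a fortiori one over $\F_{p^n}$, so the conclusion of the previous paragraph rules out every $d$ for which such a factor exists. It remains to discard the two exceptional families: $(3^k+1)/2\equiv 2\pmod 3$, so it never meets the hypothesis $d\equiv 1\pmod p$, while the Frobenius identity $u^{p^k}-v^{p^k}=(u-v)^{p^k}$ collapses the numerator of $\phi_{p^k+1}$ to $(X-Y)^{p^k+1}+(X-Z)^{p^k+1}-(Y-Z)^{p^k+1}$ and hence gives
\[
\phi_{p^k+1}(X,Y,Z)=(X-Y)^{p^k-1}+(X-Z)^{p^k-1},
\]
a squarefree product of lines whose slopes are $(p^k-1)$-th roots of $-1$, none of which lies in $\F_p$; this confirms that $d=p^k+1$ is consistent with the absence of a reduced $\F_p$-rational component. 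Thus $d=p^k+1$ is the only surviving possibility.

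The main obstacle is the factorisation dichotomy for $\phi_d$ that I am importing: one must show that whenever $d\equiv 1\pmod p$ and $d\neq p^k+1$, the curve $\phi_d=0$ genuinely has an absolutely irreducible component that is both defined over $\F_p$ and reduced. Reducedness is the delicate point, because Lemma~\ref{lem:AMR} fails for components of higher multiplicity and $\phi_d$ can a priori acquire repeated factors; controlling the multiplicities and the field of definition simultaneously is exactly the technical core supplied by the proofs of Theorem~\ref{thm:monomial_odd}. The surrounding reduction steps---that $f'$ is nonconstant, that $T\nmid\widetilde G$, and that dehomogenisation preserves absolute irreducibility---are routine by comparison.
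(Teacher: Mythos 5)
Your reduction matches the paper's exactly: Lemma~\ref{lem:AIC_odd} applies because $f'$ is nonconstant, the hyperplane section $T=0$ of $\widetilde G$ is $a_d\phi_d$, and Lemma~\ref{lem:AMR} turns a reduced absolutely irreducible $\F_{p^n}$-component of $\phi_d$ into a contradiction; your identity $\phi_{p^k+1}=(X-Y)^{p^k-1}+(X-Z)^{p^k-1}$ and the congruence $(3^k+1)/2\equiv 2\pmod 3$ are also correct. The gap is in the dichotomy you import. What Leducq actually proves (and what the paper cites from~\cite{Led2012}) is: for $d\equiv 1\pmod p$, the polynomial $\phi_d(U+W,U,V+W)$ has an absolutely irreducible factor over $\F_p$ provided $d$ is \emph{neither} of the form $p^k+1$ \emph{nor} of the form $p^k(p^k-1)+1$. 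This second exceptional family is invisible in your write-up: the Kasami-like degrees $d=p^i(p^i-1)+1$ satisfy $d\equiv 1\pmod p$, are never of the form $p^k+1$ (that would force $p^i-1$ to be a power of $p$), and are never of the form $(3^k+1)/2$ (that family is $\equiv 2\pmod 3$, as you yourself note). So for these degrees your argument imports nothing and proves nothing; attributing to Leducq and Zieve an ``intermediate result'' with exceptional set $\{p^k+1,(3^k+1)/2\}$ conflates their final classification theorem with the absolute-irreducibility statements their proofs actually contain.

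The missing step is fortunately elementary, and it is exactly the paper's Lemma~\ref{lem:odd_degree}: every $d=p^i(p^i-1)+1$ is odd, and for odd $d$ with $p\nmid d$ the numerator of $\phi_d$ vanishes under the substitution $Z=-Y$, so $Y+Z$ divides $\phi_d$, while differentiating the numerator with respect to $Y$ shows $(Y+Z)^2\nmid\phi_d$; thus $Y+Z$ is a reduced absolutely irreducible component of $\phi_d$ defined over $\F_p$, and Lemmas~\ref{lem:AMR} and~\ref{lem:AIC_odd} dispose of this case. (This observation also shows that your claimed dichotomy is in fact \emph{true} for $d\equiv 1\pmod p$ --- but establishing it requires precisely the case distinction your proposal skips.) A smaller imprecision: reducedness in the main case is not a black box ``supplied by the proofs of Theorem~\ref{thm:monomial_odd}''; the paper derives it from a specific further result of Leducq, namely that $\phi_d(U+W,U,V+W)$ has only finitely many singular points, so the singular locus has dimension $0$ and $\phi_d$ can have no multiple component.
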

\par
To prove Proposition~\ref{pro:main_21}, we require the following lemma.
\begin{lemma}
\label{lem:odd_degree}
Let $p$ be an odd prime and let $f\in \F_{p^n}[X]$ be of degree $d$. If $f$ is exceptional planar and $p\nmid d$, then $d$ is even.
\end{lemma}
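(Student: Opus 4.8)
The plan is to argue by contradiction. Suppose $f$ is exceptional planar; if $d=1$ then $f$ is affine and its difference function is constant, hence not a permutation, so $f$ is not planar. Thus when deriving a contradiction I may assume $d\geq 3$, so $f'$ is nonconstant, and Lemma~\ref{lem:AIC_odd} tells us that $G$ has \emph{no} absolutely irreducible factor over $\F_{p^n}$. I will show that if $d$ is odd (and $p\nmid d$) then $G$ \emph{does} have such a factor; this contradiction forces $d$ to be even.

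To manufacture an absolutely irreducible factor I would run the hyperplane-section strategy through Lemma~\ref{lem:AMR}. The homogeneous polynomial $\widetilde G$ cuts out a surface in $\P^3$, and its intersection with the plane at infinity $\{T=0\}$ is the plane curve $\widetilde G(X,Y,Z,0)=a_d\,\phi_d(X,Y,Z)=0$, since only the $j=d$ summand survives the substitution $T=0$. Hence it suffices to exhibit one reduced absolutely irreducible component of $\phi_d$ defined over $\F_{p^n}$: Lemma~\ref{lem:AMR} then produces an absolutely irreducible component of $\widetilde G$ over $\F_{p^n}$, and because the $j=d$ term $a_d\phi_d$ is free of $T$ we have $T\nmid\widetilde G$, so this component is not the plane at infinity and dehomogenises to an absolutely irreducible factor of $G=\widetilde G(X,Y,Z,1)$ over $\F_{p^n}$.

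The core of the argument is to locate such a component of $\phi_d$, and for odd $d$ I expect a linear one. Writing $N_d(X,Y,Z)=X^d-Y^d-Z^d+(-X+Y+Z)^d$ for the numerator in~\eqref{eqn:def_phi}, the substitution $Z=-Y$ gives, since $d$ is odd,
\[
N_d(X,Y,-Y)=X^d-Y^d+Y^d+(-X)^d=0,
\]
so $Y+Z$ divides $N_d$, and therefore divides $\phi_d$ because $(X-Y)(X-Z)$ is coprime to the irreducible form $Y+Z$. The line $\{Y+Z=0\}$ is absolutely irreducible and defined over $\F_{p^n}$, so the only remaining point is reducedness, i.e.\ that $Y+Z$ divides $\phi_d$ exactly once. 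This is the step where I expect the real work and where the hypothesis $p\nmid d$ is essential: as $(X-Y)(X-Z)$ is prime to $Y+Z$, the multiplicity of $Y+Z$ in $\phi_d$ equals its multiplicity in $N_d$, and the latter is one precisely when $\partial N_d/\partial Z$ does not vanish on $Z=-Y$. A direct computation gives
\[
\frac{\partial N_d}{\partial Z}\Big|_{Z=-Y}
=-d\,Z^{d-1}\Big|_{Z=-Y}+d\,(-X+Y+Z)^{d-1}\Big|_{Z=-Y}
=d\,(X^{d-1}-Y^{d-1}),
\]
using again that $d$ is odd so that $d-1$ is even. This polynomial is nonzero exactly because $p\nmid d$, which forces multiplicity one and makes $\{Y+Z=0\}$ a reduced component of $\phi_d$.

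Assembling the pieces: for odd $d$ with $p\nmid d$, the curve $\phi_d$ has a reduced absolutely irreducible component over $\F_{p^n}$, so by Lemma~\ref{lem:AMR} and the dehomogenisation remark $G$ has an absolutely irreducible factor over $\F_{p^n}$; Lemma~\ref{lem:AIC_odd} then contradicts the assumed exceptional planarity of $f$. Hence $d$ is even. The main obstacle, and the unique place where both the oddness of $d$ and the condition $p\nmid d$ are used, is the reducedness computation above; everything else is the standard Aubry--McGuire--Rodier reduction to the curve at infinity.
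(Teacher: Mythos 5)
Your proof is correct and follows essentially the same route as the paper: both intersect $\widetilde G$ with the hyperplane $T=0$, show that $Y+Z$ is a reduced absolutely irreducible component of $\phi_d$ (you differentiate the numerator with respect to $Z$, the paper with respect to $Y$ --- an immaterial difference by symmetry), and then conclude via Lemmas~\ref{lem:AMR} and~\ref{lem:AIC_odd}. Your explicit justification of the dehomogenisation step (using $T\nmid\widetilde G$) is a detail the paper leaves implicit, but it is the same argument.
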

\begin{proof}
Suppose for a contradiction that $f$ is exceptional planar and $p\nmid d$ and $d$ is odd. By the definition of a planar function, the degree of $f$ must be at least $2$, so that $f'$ is not constant. The intersection of $\widetilde{G}$ with the hyperplane $T=0$ is defined by the polynomial
\[
\widetilde{G}(X,Y,Z,0)=a_d\phi_d(X,Y,Z).
\]
Since $d$ is odd, $Y+Z$ divides $\phi_d$. By taking the partial derivative of $X^d-Y^d-Z^d+(-X+Y+Z)^d$ with respect to $Y$, we see that $(Y+Z)^2$ does not divide $\phi_d$. Therefore $Y+Z$ is a reduced absolutely irreducible component of $\phi_d$ and hence, by Lemma~\ref{lem:AMR}, $\widetilde{G}$ (and so also $G$ itself) has an absolutely irreducible factor over $\F_{p^n}$. Therefore, by Lemma~\ref{lem:AIC_odd}, the polynomial $f$ is not exceptional planar, a contradiction.
\end{proof}
\par
We now prove Proposition~\ref{pro:main_21}.
\begin{proof}[Proof of Proposition~\ref{pro:main_21}]
Suppose that $f$ is exceptional planar and $d\equiv 1 \pmod{p}$. We show that this is impossible unless $d$ is of the form $p^k+1$. 
\par
If $d=p^i(p^i-1)+1$ for some nonnegative integer~$i$, then $d$ is odd and $f$ is not exceptional planar by Lemma~\ref{lem:odd_degree}, so assume that $d$ is not of this form. In particular, $f'$ is not constant. The intersection of $\widetilde{G}$ with the hyperplane $T=0$ is defined by the polynomial
\[
\widetilde{G}(X,Y,Z,0)=\phi_d(X,Y,Z).
\]
Since $d\equiv 1\pmod p$ and $d$ is not of the form $p^k(p^k-1)+1$, the polynomial
\[
\phi_d(U+W,U,V+W)=\frac{(U+W)^d-U^d-(V+W)^d+V^d}{(U-V)W}
\]
has an absolutely irreducible factor of $\F_p$ provided that $d$ is not of the form $p^k+1$, as shown by Leducq~\cite{Led2012}. Furthermore, Leducq~\cite{Led2012} showed that the number of singular points of $\phi_d(U+W,U,V+W)$ is finite. Hence the variety defined by $\phi_d$ and all of its partial derivatives has dimension $0$, which implies that $\phi_d$ has no multiple component. Therefore $\phi_d$ has a reduced absolutely irreducible factor over $\F_p$ and so, by Lemmas~\ref{lem:AMR} and~\ref{lem:AIC_odd}, the polynomial $f$ is not exceptional planar, a contradiction.
\end{proof}
\par
\begin{proposition}
\label{pro:main_22}
Let $p$ be an odd prime and let $f\in \F_{p^n}[X]$ be of the form $f(X)=X^{p^k+1}+h(X)$ for some positive integer $k$, where the degree $e$ of $h$ satisfies $e<p^k+1$. If $f$ is exceptional planar, then either $p\mid e$ or $p\mid e-1$.
\end{proposition}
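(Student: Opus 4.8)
The plan is to argue by contradiction: suppose $f$ is exceptional planar but $p\nmid e$ and $p\nmid e-1$. Following the now-established strategy, I would try to produce a reduced absolutely irreducible component of the intersection $\widetilde{G}\cap\{T=0\}$, which by Lemma~\ref{lem:AMR} gives an absolutely irreducible factor of $G$ over $\F_{p^n}$, contradicting Lemma~\ref{lem:AIC_odd}. The obstruction is that $\widetilde{G}(X,Y,Z,0)=\phi_d(X,Y,Z)$ with $d=p^k+1$, and from the discussion preceding the proposition we know $\phi_{p^k+1}$ is \emph{not} absolutely irreducible --- indeed this is precisely the exceptional case of Leducq's analysis. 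So the hyperplane section at infinity is useless, and I must instead exploit the lower-degree terms of $f$ by cutting with a different hyperplane.

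The natural idea is to intersect $\widetilde{G}$ with a hyperplane through which the contribution of the top term $X^{p^k+1}$ degenerates while the term $a_e X^e$ survives and controls the geometry. Concretely, I would substitute coordinates adapted to the structure of $\phi_{p^k+1}$. The key computation is that $\phi_{p^k+1}(X,Y,Z)$ factors nicely: writing the numerator $X^{p^k+1}-Y^{p^k+1}-Z^{p^k+1}+(-X+Y+Z)^{p^k+1}$ and using that $t\mapsto t^{p^k}$ is additive in characteristic $p$, one finds the numerator equals $-(X-Y)(X-Z)\bigl[(X-Y)^{p^k}+(X-Z)^{p^k}-(X-Y)(X-Z)^{\,p^k-1}\cdots\bigr]$ --- more cleanly, after the substitution $U=X-Y$, $V=X-Z$ it becomes a quadratic-type form in the Frobenius twists $U,U^{p^k},V,V^{p^k}$. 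The factorization of $\phi_{p^k+1}$ into (conjugates of) hyperplanes $aU+bV$ with $a^{p^k+1}=b^{p^k+1}$ is exactly what makes $d=p^k+1$ exceptional, and it is this explicit factored form that I would carry forward.

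With the top form $\phi_{p^k+1}$ fully split into linear factors, I would then look at $\widetilde{G}$ as a deformation: $\widetilde{G}=\phi_{p^k+1}T^{\,0}+\cdots+a_e\phi_e T^{\,p^k+1-e}+\cdots$. The strategy is to choose a line or hyperplane meeting the support of $\phi_{p^k+1}$ at a single one of its linear factors (so that, restricted there, the leading form vanishes to first order) and to read off the next surviving term, which is governed by $\phi_e$ and hence by $e$. The condition $e\not\equiv0,1\pmod p$ should force $\phi_e$ to contribute a reduced, absolutely irreducible local branch transverse to that linear factor, yielding via Lemma~\ref{lem:AMR} an absolutely irreducible component of $G$ and the desired contradiction. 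The hardest part, I expect, is the bookkeeping that shows the section I pick is \emph{reduced} --- i.e.\ verifying that the linear factors of $\phi_{p^k+1}$ are pairwise distinct (no repeated component) and that the perturbation by $a_e\phi_e$ does not accidentally reintroduce a multiple component. This is a genuine degeneration/Newton-polygon argument rather than a routine calculation, and it is precisely where the congruence hypotheses on $e$ must be used. I would organize it by passing to the local ring at a chosen singular point of $\widetilde{G}(\cdot,0)$ on the line at infinity and analyzing the tangent cone, treating the cases $p\mid e$ and $p\mid e-1$ as the two ways the argument can break down.
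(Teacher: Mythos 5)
Your proposal never actually gets off the ground: it is a plan whose decisive steps are stated in the conditional. Your starting observation is fine --- writing $U=X-Y$, $V=X-Z$ one finds $\phi_{p^k+1}=U^{p^k-1}+V^{p^k-1}$, a product of $p^k-1$ \emph{distinct} linear forms $U-\zeta V$ with $\zeta^{p^k-1}=-1$, so the section at $T=0$ is indeed useless. But everything after that is deferred: "should force a reduced, absolutely irreducible local branch," "I expect\dots the bookkeeping." The part you yourself flag as the hardest --- proving reducedness of the chosen section and controlling the perturbation by $a_e\phi_e$ --- is precisely the mathematical content of the proposition, and it is not carried out. Moreover, the route you sketch has a concrete obstruction it ignores: Lemma~\ref{lem:AMR} requires the cutting surface and the resulting component to be defined over $\F_q$, $q=p^n$. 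No linear factor $U-\zeta V$ of $\phi_{p^k+1}$ is defined over $\F_p$ (if $\zeta\in\F_p$ then $\zeta^{p^k-1}=1\ne-1$), and such factors lie in $\F_{p^n}$ only for special $n$, about which the proposition assumes nothing. So a hyperplane "meeting the support of $\phi_{p^k+1}$ at a single one of its linear factors" is in general not $\F_q$-rational, nor is any local branch you would produce near it, and your plan cannot invoke Lemma~\ref{lem:AMR} at all.

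For comparison, the paper's proof avoids hyperplane sections and Lemma~\ref{lem:AMR} entirely for this proposition; it is a purely algebraic factorization argument in the style of Delgado and Janwa. Since $G$ is defined over $\F_q$, Lemma~\ref{lem:AIC_odd} forces $G$ to be not absolutely irreducible, so one may write $G=(P_s+\cdots+P_0)(Q_t+\cdots+Q_0)$ with $P_i,Q_i$ homogeneous of degree $i$ and $s,t>0$. Comparing homogeneous parts with $G=\sum_j a_j\phi_j$ gives $P_sQ_t=\phi_{p^k+1}$, which is square-free (Lemma~\ref{lem:phi_pk1_square-free}), so $P_s$ and $Q_t$ are coprime; then, since $\phi_{p^k}=0$ and $a_j=0$ for $e<j\le p^k$, the lower homogeneous parts of $P$ and $Q$ are forced to vanish one by one until degree $e$, where
\[
P_sQ_{e-s-2}+P_{e-t-2}Q_t=a_e\phi_e.
\]
Restricting to $Y=X$, where $\phi_{p^k+1}(X,X,Z)=(X-Z)^{p^k-1}$, shows that $X-Z$ (or, in the degenerate cases, all of $Q_t$ or $P_s$) would have to divide $\phi_e$ evaluated at $Y=X$; Lemma~\ref{lem:phi_j}(ii) rules this out exactly when $p\nmid e$ and $p\nmid e-1$. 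That elementary coprimality statement is where the congruence hypotheses enter --- no degeneration or Newton-polygon analysis is needed, and the rationality issue that blocks your approach never arises.
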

\par
In order to prove Proposition~\ref{pro:main_22}, we prove two lemmas on the polynomials $\phi_j$, defined by~\eqref{eqn:def_phi}.
\begin{lemma}
\label{lem:phi_j}
Let $p$ be a prime and let $\phi_j\in\F_p[X,Y,Z]$ be defined by~\eqref{eqn:def_phi}.
\begin{enumerate}[(i)]
\item We have
\[
\phi_j(X,X,Z)=j\,\frac{X^{j-1}-Z^{j-1}}{X-Z}.
\]
\item If $p\nmid j$ and $p\nmid j-1$, then $\phi_j(X,X,Z)$ is not divisible $X-Z$ and $\phi_j(X,X,Z)$ and $\phi_{p^k+1}(X,X,Z)$ are coprime.
\end{enumerate}
\end{lemma}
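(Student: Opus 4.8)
The plan is to treat the two parts in sequence, with part~(i) feeding directly into part~(ii).

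For part~(i) I would clear the denominator and work with the defining identity $\phi_j(X,Y,Z)\,(X-Y)(X-Z) = X^j - Y^j - Z^j + (-X+Y+Z)^j$. Setting $Y=X$ makes the right-hand side vanish identically, since $X^j$ cancels $-Y^j$ and $-Z^j$ cancels $(-X+Y+Z)^j = Z^j$; this matches the factor $(X-Y)$ on the left and confirms we are in a $0/0$ situation. To extract the actual value $\phi_j(X,X,Z)$, I would differentiate both sides with respect to $Y$ and then substitute $Y=X$. On the left-hand side only the term $-\phi_j(X,X,Z)\,(X-Z)$ survives (the term carrying $\partial\phi_j/\partial Y$ retains the factor $X-Y$ and dies), while the right-hand side becomes $-j\,X^{j-1} + j\,Z^{j-1}$. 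Dividing through yields $\phi_j(X,X,Z) = j\,(X^{j-1}-Z^{j-1})/(X-Z)$, which is the claimed formula.

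For part~(ii), I would first expand the formula from part~(i) as $\phi_j(X,X,Z) = j\sum_{i=0}^{j-2} X^{j-2-i}Z^i$. Divisibility by $X-Z$ is then tested by setting $X=Z$, where the sum collapses to $(j-1)Z^{j-2}$, giving $\phi_j(Z,Z,Z) = j\,(j-1)\,Z^{j-2}$. Under the hypotheses $p\nmid j$ and $p\nmid(j-1)$ this is a nonzero element of $\F_p$ times $Z^{j-2}$, so $X-Z$ does not divide $\phi_j(X,X,Z)$. The key observation for the coprimality statement is that $\phi_{p^k+1}(X,X,Z)$ degenerates into a pure power of $X-Z$: by part~(i) it equals $(p^k+1)\,(X^{p^k}-Z^{p^k})/(X-Z)$, where the scalar $p^k+1\equiv 1\pmod p$ is a unit, and the characteristic-$p$ Frobenius identity $X^{p^k}-Z^{p^k}=(X-Z)^{p^k}$ reduces the quotient to a nonzero constant times $(X-Z)^{p^k-1}$. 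Hence the only irreducible factor of $\phi_{p^k+1}(X,X,Z)$ is $X-Z$, which by the preceding step does not divide $\phi_j(X,X,Z)$; the two polynomials are therefore coprime.

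I expect the whole lemma to be essentially routine once the differentiation trick in part~(i) is in place. The only step requiring a moment of thought is recognising that the Frobenius factorisation collapses $\phi_{p^k+1}(X,X,Z)$ to a single power of $X-Z$, after which coprimality follows immediately from the nondivisibility already established. I do not anticipate a genuine obstacle beyond this bookkeeping.
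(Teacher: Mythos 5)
Your proof is correct, and it shares the paper's overall architecture: establish the closed formula in (i), note the Frobenius collapse $\phi_{p^k+1}(X,X,Z)=(p^k+1)(X-Z)^{p^k-1}$, and reduce coprimality to showing that $X-Z$ does not divide $\phi_j(X,X,Z)$. You differ in the mechanics of two steps. For (i), the paper avoids differentiation: it expands $(X-Z)\phi_j(X,Y,Z)$ as $\sum_{i=0}^{j-1}X^i\,Y^{j-i-1}-\sum_{i=0}^{j-1}(-X+Y+Z)^i\,Z^{j-i-1}$ and then sets $Y=X$, so the formula drops out of a purely algebraic expansion; your differentiate-then-specialise argument is an equally valid alternative, since formal derivatives and the product rule work fine in characteristic $p$, and the identity $\phi_j(X,X,Z)(X-Z)=j(X^{j-1}-Z^{j-1})$ determines $\phi_j(X,X,Z)$ uniquely in the integral domain $\F_p[X,Z]$. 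For the nondivisibility in (ii), the paper remarks that, since $p\nmid j-1$, the polynomial $\phi_j(X,X,Z)$ splits over $\overline{\F}_p$ into linear factors $X-\zeta Z$ with $\zeta^{j-1}=1$ and $\zeta\ne 1$, none of which is $X-Z$; you instead evaluate at $X=Z$ to get $j(j-1)Z^{j-2}\ne 0$, which is more elementary (no appeal to roots of unity) and makes visible exactly where both hypotheses $p\nmid j$ and $p\nmid j-1$ enter. Both routes are short and correct; the paper's factorisation yields slightly more information (distinctness of the linear factors, hence squarefreeness of $\phi_j(X,X,Z)$), while your evaluation isolates precisely the minimal fact needed for the coprimality conclusion.
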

\begin{proof}
We may write 
\begin{align*}
(X-Z)\phi_j(X,Y,Z)&=\frac{X^j-Y^j}{X-Y}-\frac{(-X+Y+Z)^j-Z^j}{(-X+Y+Z)-Z}\\
                  &=\sum_{i=0}^{j-1}X^i\,Y^{j-i-1}-\sum_{i=0}^{j-1}(-X+Y+Z)^i\,Z^{j-i-1},
\end{align*}
from which (i) follows. If $p\nmid j$, then $\phi_j(X,X,Z)$ is not the zero polynomial. If, in addition, $p\nmid j-1$, then $\phi_j(X,X,Z)$ splits into linear factors different from $X-Z$. From (i) we have $\phi_{p^k+1}(X,X,Z)=(p^k+1)(X-Z)^{p^k-1}$. Hence, if $p\nmid j$ and $p\nmid j-1$, then $\phi_j(X,X,Z)$ and $\phi_{p^k+1}(X,X,Z)$ are coprime. This proves (ii). 
\end{proof}
\par
\begin{lemma}
\label{lem:phi_pk1_square-free}
Let $p$ be a prime and let $\phi_j\in\F_p[X,Y,Z]$ be defined by~\eqref{eqn:def_phi}. Then $\phi_{p^k+1}$ is square-free.
\end{lemma}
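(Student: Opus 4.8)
The plan is to compute $\phi_{p^k+1}$ explicitly; the square-freeness will then be transparent. Write $m=p^k$. Because the Frobenius endomorphism $x\mapsto x^m$ is additive over $\F_p$, the key simplification $(-X+Y+Z)^m=-X^m+Y^m+Z^m$ is available, and I expect it to collapse $\phi_{p^k+1}$ all the way to
\[
\phi_{p^k+1}(X,Y,Z)=(X-Y)^{p^k-1}+(X-Z)^{p^k-1}.
\]
As a sanity check, setting $Y=X$ gives $(X-Z)^{p^k-1}$, which agrees with Lemma~\ref{lem:phi_j}(i), since $\phi_{p^k+1}(X,X,Z)=(p^k+1)(X-Z)^{p^k-1}=(X-Z)^{p^k-1}$ in characteristic $p$.

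To establish this identity cleanly I would avoid a three-variable expansion and instead pass to the translation-invariant variables $U=X-Y$ and $V=X-Z$, in which $Y-Z=V-U$. The numerator of $\phi_{p^k+1}$ in~\eqref{eqn:def_phi} then equals $U^{m+1}+V^{m+1}-(V-U)^{m+1}$, and a single Frobenius expansion
\[
(V-U)^{m+1}=(V-U)(V^m-U^m)=V^{m+1}-VU^m-UV^m+U^{m+1}
\]
reduces the numerator to $U^mV+UV^m=UV(U^{m-1}+V^{m-1})$. Dividing by $UV=(X-Y)(X-Z)$ yields the displayed closed form (and in particular reconfirms that the division in~\eqref{eqn:def_phi} is exact).

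For the square-freeness, note that $(X,Y,Z)\mapsto(X,\,X-Y,\,X-Z)$ is an invertible linear change of coordinates on $\overline{\F}_p^3$, hence preserves square-freeness, and it carries $\phi_{p^k+1}$ to $U^{p^k-1}+V^{p^k-1}$, a polynomial independent of $X$. So it suffices to show $U^n+V^n$ is square-free for $n=p^k-1$. Since $n\equiv -1\pmod p$ we have $p\nmid n$, so $t^n+1$ is separable and has $n$ distinct roots $\zeta_1,\dots,\zeta_n\in\overline{\F}_p$; thus $U^n+V^n=\prod_{i=1}^{n}(U-\zeta_i V)$ is a product of pairwise non-proportional linear forms and is therefore square-free. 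Equivalently, $\phi_{p^k+1}$ factors into $p^k-1$ distinct planes, all of which contain the line $X=Y=Z$.

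The computation is short, so there is no serious obstacle; the only point needing care is the closed-form identity, and the passage to $U,V$ turns it into one application of Frobenius. I would state the conclusion as absolute square-freeness (no repeated factor over $\overline{\F}_p$), since that is the form in which it feeds into Lemma~\ref{lem:AMR} for the proof of Proposition~\ref{pro:main_22}.
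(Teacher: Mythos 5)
Your proof is correct, but it takes a genuinely different route from the paper. The paper does not compute $\phi_{p^k+1}$ at all: it works with the numerator $\psi(X,Y,Z)=X^{p^k+1}-Y^{p^k+1}-Z^{p^k+1}+(-X+Y+Z)^{p^k+1}$, of which $\phi_{p^k+1}$ is a divisor, and shows $\psi$ is square-free by a gcd-with-partial-derivatives criterion (checking $\gcd(\psi,\partial\psi/\partial Y)\in\F_p[X,Z]$, $\gcd(\psi,\partial\psi/\partial Z)\in\F_p[X,Y]$, and $X\nmid\psi$, with the verification left to the reader). You instead derive the closed form $\phi_{p^k+1}=(X-Y)^{p^k-1}+(X-Z)^{p^k-1}$ and factor it as $\prod_{i=1}^{p^k-1}\bigl((X-Y)-\zeta_i(X-Z)\bigr)$ with the $\zeta_i$ the distinct roots of $t^{p^k-1}+1$ (distinct since $p\nmid p^k-1$). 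Your closed form is correct --- one can check that $(X-Z)(X-Y)^{p^k}+(X-Y)(X-Z)^{p^k}$ expands, via Frobenius, to exactly the numerator of $\phi_{p^k+1}$ --- and your argument buys strictly more than the paper's: an explicit factorization into $p^k-1$ distinct planes through the line $X=Y=Z$, absolute square-freeness stated in exactly the form consumed later in Proposition~\ref{pro:main_22}, and no unverified gcd computations. One point of rigor to tighten: your assertion that the numerator ``then equals'' $U^{m+1}+V^{m+1}-(V-U)^{m+1}$ in the variables $U=X-Y$, $V=X-Z$ is not a formal substitution identity --- for a generic exponent the left side retains $X$-dependent cross terms (e.g.\ a $6XUV$ term when $m+1=3$) --- so it is itself a consequence of Frobenius, not a preliminary to it; you should either expand all four terms $X^{m+1}-(X-U)^{m+1}-(X-V)^{m+1}+(X-U-V)^{m+1}$ with $(\cdot)^{m+1}=(\cdot)^m(\cdot)$ and watch the $X$-terms cancel, or phrase the computation via the $\F_p$-bilinear form $B(x,y)=x^my+xy^m$ with $g(x+y)=g(x)+g(y)+B(x,y)$ for $g(x)=x^{m+1}$. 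With that one line added, the proof is complete.
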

\begin{proof}
Write
\[
\psi(X,Y,Z)=X^{p^k+1}-Y^{p^k+1}-Z^{p^k+1}+(-X+Y+Z)^{p^k+1}.
\]
Then $\phi_{p^k+1}$ divides $\psi$. We show that $\psi$ is square-free, for which it is sufficient to show that all of the following conditions are satisfied:
\begin{itemize}
\item $\gcd(\psi,\partial \psi/\partial Y)\in\F_p[X,Z]$, 
\item $\gcd(\psi,\partial \psi/\partial Z)\in\F_p[X,Y]$,
\item $X\nmid \psi$.
\end{itemize}
These conditions are readily verified. 
\end{proof}
\par
We now prove Proposition~\ref{pro:main_22}, using an idea of Delgado and Janwa~\cite{DelJan2012}.
\begin{proof}[Proof of Proposition~\ref{pro:main_22}]
Suppose that $f$ is exceptional planar. Then $G$, defined in~\eqref{eqn:def_poly_odd}, is not absolutely irreducible by Lemma~\ref{lem:AIC_odd}. Suppose further that $p\nmid e$ and $p\nmid e-1$. We show that this leads to a contradiction.
\par
We may write
\begin{equation}
G(X,Y,Z)=(P_s+P_{s-1}+\cdots+P_0)(Q_t+Q_{t-1}+\cdots+Q_0),   \label{eqn:G_PQ}
\end{equation}
where $P_i$ and $Q_i$ are zero or homogeneous polynomials of degree $i$, defined over the algebraic closure of $\F_{p^n}$, and $P_sQ_t$ is nonzero. Since $G$ is not absolutely irreducible, we may also assume that $s,t>0$. Write
\[
f(X)=\sum_{j=0}^{p^k+1}a_jX^j,
\]
where $a_{p^k+1}=1$ and recall from~\eqref{eqn:G_phi} that
\begin{equation}
G(X,Y,Z)=\sum_{j=2}^{p^k+1}a_j\phi_j(X,Y,Z),   \label{eqn:G_phi_2}
\end{equation}
where the $\phi_j$'s are defined in~\eqref{eqn:def_phi}. Notice that the degree of $\phi_j$ is $j-2$ and thus, since $a_{p^k+1}=1$, the degree of $G$ is $p^k-1$. Hence $s+t=p^k-1$ by~\eqref{eqn:G_PQ}. From~\eqref{eqn:G_PQ} and~\eqref{eqn:G_phi_2} we find that
\begin{equation}
P_sQ_t=\phi_{p^k+1}.   \label{eqn:PQ_phi_pk1}
\end{equation}
Therefore, by Lemma~\ref{lem:phi_pk1_square-free}, $P_s$ and $Q_t$ are coprime. From~\eqref{eqn:G_PQ} and~\eqref{eqn:G_phi_2} we also find that
\[
P_sQ_{t-1}+P_{s-1}Q_t=a_{p^k}\phi_{p^k}=0
\]
since $\phi_{p^k}=0$. Hence $P_s$ divides $P_{s-1}Q_t$ and so $P_s$ divides $P_{s-1}$, which by a degree argument implies that $P_{s-1}=0$. Likewise, we see that $Q_{t-1}=0$. Now, by the assumed form of $f$, we have
\begin{equation}
a_j=0\quad\text{for each $j\in\{p^k,p^k-1,\dots,e+1\}$}.   \label{eqn:a_j_zero}
\end{equation}
Since $P_{s-1}=Q_{t-1}=0$, we have from~\eqref{eqn:G_PQ} and~\eqref{eqn:G_phi_2}
\[
P_sQ_{t-2}+P_{s-2}Q_t=a_{p^k-1}\phi_{p^k-1}.
\]
If $p^k-1\ge e+1$, the right hand side equals zero by~\eqref{eqn:a_j_zero} and, by an argument similar to that used above, we conclude that $P_{s-2}=Q_{t-2}=0$. We can continue in this way to show that
\[
P_{t-1}=\cdots=P_{e-t-1}=Q_{s-1}=\cdots=Q_{e-s-1}=0.
\]
Hence, by invoking~\eqref{eqn:G_PQ} and~\eqref{eqn:G_phi_2} again, we have
\begin{equation}
P_sQ_{e-s-2}+P_{e-t-2}Q_t=a_e\phi_e.   \label{eqn:PQ_phi_e}
\end{equation}
If $Q_{e-s-2}=0$, then $Q_t$ divides $\phi_e$ and also $\phi_{p^k+1}$ by~\eqref{eqn:PQ_phi_pk1}. This contradicts Lemma~\ref{lem:phi_j} (ii). Likewise, we get a contradiction if $P_{e-t-2}=0$. Hence we may assume that $P_{e-t-2}$ and $Q_{e-s-2}$ are both nonzero. From~\eqref{eqn:PQ_phi_pk1} and Lemma~\ref{lem:phi_j}~(i) we find that
\begin{equation}
P_s(X,X,Z)Q_t(X,X,Z)=\phi_{p^k+1}(X,X,Z)=(X-Z)^{p^k-1}.   \label{eqn:PQ_XXZ}
\end{equation}
Hence $X-Z$ divides $P_s(X,X,Z)$ and $Q_t(X,X,Z)$ and thus $X-Z$ also divides $\phi_e(X,X,Z)$ by~\eqref{eqn:PQ_phi_e}. From~\eqref{eqn:PQ_XXZ} we then see that $\phi_e(X,X,Z)$ and $\phi_{p^k+1}(X,X,Z)$ share the factor $X-Z$, which contradicts Lemma~\ref{lem:phi_j} (ii).
\end{proof}


\section{The even characteristic case}

In this section we prove Theorem~\ref{thm:main_3}. Let $q$ be a power of $2$, let $f\in\F_q[X]$, and let $F(X,Y,W)$ be the polynomial defined by~\eqref{def:poly_even}. We consider the polynomial $F(X,Y,Z+X)$, namely
\begin{equation}
H(X,Y,Z)=\frac{f(X)+f(Y)+f(Z)+f(X+Y+Z)}{(X+Y)(X+Z)}+1.   \label{eqn:def_poly_even}
\end{equation}
Then $f$ induces a planar function on $\F_{q^r}$ if and only if all $\F_{q^r}$-rational zeros of $H$ satisfy $X=Y$ or $X=Z$.
\par
The following lemma is our counterpart of Lemma~\ref{lem:AIC_odd} in even characteristic.
\begin{lemma}
\label{lem:AIC_even}
Let $q$ be a power of $2$, let $f\in\F_q[X]$, and let $H$ be defined by~\eqref{eqn:def_poly_even}. If $H$ has an absolutely irreducible factor over $\F_q$, then $f$ is not exceptional planar.
\end{lemma}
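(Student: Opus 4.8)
The plan is to follow the proof of Lemma~\ref{lem:AIC_odd} almost verbatim, the only genuinely new point being the divisibility claim. So suppose $H$ has an absolutely irreducible factor over $\F_q$. By the Lang--Weil bound the number of $\F_{q^r}$-rational zeros of $H$ is $q^{2r}+O(q^{3r/2})$, with implied constant depending only on $\deg H$. I would then prove that $H$ is divisible by neither $X+Y$ nor $X+Z$. Granting this, $H(X,X,Z)$ and $H(X,Y,X)$ are nonzero polynomials, each having at most $(\deg H)\,q^r$ zeros in $\F_{q^r}$; hence for all sufficiently large $r$ the surface $H=0$ carries $\F_{q^r}$-rational points with $X\ne Y$ and $X\ne Z$, and by the characterisation following~\eqref{eqn:def_poly_even} the function $f$ is not planar on $\F_{q^r}$.

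To prove the divisibility claim I would first note that $H$ is symmetric in $Y$ and $Z$, so it suffices to rule out $X+Y$. Write $H=\widetilde N/((X+Y)(X+Z))$, where $\widetilde N=f(X)+f(Y)+f(Z)+f(X+Y+Z)+(X+Y)(X+Z)$; here $H$ is indeed a polynomial since both $X+Y$ and $X+Z$ divide $f(X)+f(Y)+f(Z)+f(X+Y+Z)$, as one checks by setting $X=Y$ and $X=Z$. Since $X+Y$ and $X+Z$ are coprime, $X+Y$ divides $H$ exactly when $(X+Y)^2$ divides $\widetilde N$, and in that case $X+Y$ divides $\partial\widetilde N/\partial Y$. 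In characteristic $2$ this partial derivative equals
\[
\frac{\partial\widetilde N}{\partial Y}=f'(Y)+f'(X+Y+Z)+(X+Z),
\]
and reducing it modulo $X+Y$, that is, substituting $X=Y$, yields the polynomial identity $f'(Y)+f'(Z)=Y+Z$.

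The decisive observation is that this identity cannot hold in characteristic $2$, which is exactly why, unlike in Lemma~\ref{lem:AIC_odd}, no hypothesis on $f'$ is required. Writing $f=\sum_j a_jX^j$ we have $f'=\sum_{j\text{ odd}}a_jX^{j-1}$, so every nonconstant term of $f'$ has even degree and $f'$ contains no term $X^1$; comparing coefficients of $Y$ in $f'(Y)+f'(Z)=Y+Z$ then gives $0=1$. This contradiction establishes the claim. I expect the computation and this final observation to be routine; the one conceptual point to get right is that the summand $(X+Y)(X+Z)$ arising from the ``$+1$'' in~\eqref{eqn:def_poly_even} contributes the extra term $X+Z$ to $\partial\widetilde N/\partial Y$, thereby turning the odd-characteristic conclusion ``$f'$ is constant'' into the unconditionally impossible ``$f'(Y)+f'(Z)=Y+Z$''.
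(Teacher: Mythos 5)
Your proposal is correct and follows essentially the same route as the paper: Lang--Weil, then ruling out divisibility of $H$ by $X+Y$ (and by symmetry $X+Z$) via the $Y$-partial derivative of $(X+Y)(X+Z)H$, which is $f'(Y)+f'(X+Y+Z)+X+Z$. Your only deviation is cosmetic: where the paper concludes that divisibility would force $f'(X)=X+c$, ``absurd since $q$ is even,'' you make the absurdity explicit by noting that $f'$ has only even-degree terms in characteristic $2$ and comparing coefficients of $Y$ in $f'(Y)+f'(Z)=Y+Z$ --- a welcome spelling-out of the same argument.
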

\begin{proof}
Suppose that $H$ has an absolutely irreducible factor over $\F_q$. Then, as in the proof of Lemma~\ref{lem:AIC_odd}, the number of $\F_{q^r}$-rational zeros of $H$ is $q^{2r}+O(q^{3r/2})$. We show below that $H$ is not divisible by $X+Y$ or $X+Z$, which implies that, for all sufficiently large~$r$, the polynomial $H$ has $\F_{q^r}$-rational zeros that do not satisfy $X=Y$ or $X=Z$, and so $f$ does not induce a planar function on $\F_{q^r}$.
\par
By symmetry, it is enough to show that $H$ is not divisible by $X+Y$. Suppose for a contradiction that $H$ is divisible by $X+Y$. Then the partial derivative of $(X+Y)(X+Z)H(X,Y,Z)$ with respect to $Y$, namely
\[
f'(Y)+f'(X+Y+Z)+X+Z,
\]
must be divisible by $X+Y$. This forces $f'(X)=X+c$ for some $c\in\F_q$, which is absurd since $q$ is even.
\end{proof}
\par
We now prove Theorem~\ref{thm:main_3}.
\begin{proof}[Proof of Theorem~\ref{thm:main_3}]
Write $f(X)=\sum_{j=0}^da_jX^j$, where $a_d\ne 0$. Defining
\[
\phi_j(X,Y,Z)=\frac{X^j+Y^j+Z^j+(X+Y+Z)^j}{(X+Y)(X+Z)},
\]
the polynomial $H$, defined in~\eqref{eqn:def_poly_even}, can be written as
\[
H(X,Y,Z)=1+\sum_{j=3}^da_j\phi_j(X,Y,Z)
\]
since $\phi_0=\phi_1=\phi_2=0$. Consider the homogeneous polynomial
\[
\widetilde{H}(X,Y,Z,T)=T^{d-2}+\sum_{j=3}^da_j\phi_j(X,Y,Z)\,T^{d-j}.
\]
The intersection of the projective surface defined by $\widetilde{H}$ with the hyperplane $T=0$ is defined by the polynomial
\[
\widetilde{H}(X,Y,Z,0)=a_d\phi_d(X,Y,Z).
\]
Now suppose for a contradiction that $f$ is exceptional planar and $4\nmid d$, but $d\not\in\{1,2\}$. We show that $\phi_d$ has a reduced absolutely irreducible component, which by Lemmas~\ref{lem:AMR} and~\ref{lem:AIC_even} implies that $f$ is not exceptional planar, a contradiction.
\par
First suppose that $d$ is odd and $d\ne 1$. Then $Y+Z$ divides $\phi_d$. By taking the partial derivative of $X^d+Y^d+Z^d+(X+Y+Z)^d$ with respect to $Y$, we see that $(Y+Z)^2$ does not divide $\phi_d$. Therefore $Y+Z$ is a reduced absolutely irreducible component of $\phi_d$, as required.
\par
Now suppose that $d\equiv 2\pmod 4$ and $d\ne 2$. Write $d=2e$, so that $e$ is odd and $e\ne 1$. It is readily verified that
\[
\phi_d=\phi_e^2\cdot(X+Y)(X+Z).
\]
Hence $X+Y$ divides $\phi_d$. By taking the partial derivative of $X^e+Y^e+Z^e+(X+Y+Z)^e$ with respect to $Y$, we find that $X+Y$ does not divide $\phi_e$ and so $(X+Y)^2$ does not divide $\phi_d$. Hence $X+Y$ is a reduced absolutely irreducible component of $\phi_d$, as required.
\end{proof}



\begin{thebibliography}{10}

\bibitem{AubMcGRod2010}
Y.~Aubry, G.~McGuire, and F.~Rodier, \emph{A few more functions that are not
  {APN} infinitely often}, Finite fields: theory and applications, Contemp.
  Math., vol. 518, Amer. Math. Soc., Providence, RI, 2010, pp.~23--31.

\bibitem{CarDinYua2005}
C.~Carlet, C.~Ding, and J.~Yuan, \emph{Linear codes from perfect nonlinear
  mappings and their secret sharing schemes}, IEEE Trans. Inform. Theory
  \textbf{51} (2005), 2089--2102.

\bibitem{Cau2013}
F.~Caullery, \emph{A new large class of functions not {APN} infinitely often},
  arXiv:1309.7776v1 [math.IT].

\bibitem{CouMat1997}
R.~S. Coulter and R.~W. Matthews, \emph{Planar functions and planes of
  {L}enz-{B}arlotti class {II}}, Des. Codes Cryptogr. \textbf{10} (1997),
  167--184.

\bibitem{DelJan2012}
M.~Delgado and H.~Janwa, \emph{On the conjecture on {APN} functions},
  arXiv:1207.5528v1 [math.IT].

\bibitem{DemOst1968}
P.~Dembowski and T.~G. Ostrom, \emph{Planes of order {$n$} with collineation
  groups of order {$n^{2}$}}, Math. Z. \textbf{103} (1968), 239--258.

\bibitem{DinYua2006}
C.~Ding and J.~Yuan, \emph{A family of skew {H}adamard difference sets}, J.
  Combin. Theory Ser. A \textbf{113} (2006), 1526--1535.

\bibitem{GanSpe1975}
M.~J. Ganley and E.~Spence, \emph{Relative difference sets and quasiregular
  collineation groups}, J. Combin. Theory Ser. A \textbf{19} (1975), 134--153.

\bibitem{KyuPot2008}
G.~M. Kyureghyan and A.~Pott, \emph{Some theorems on planar mappings},
  Arithmetic of finite fields, Lecture Notes in Comput. Sci., vol. 5130,
  Springer, Berlin, 2008, pp.~117--122.

\bibitem{LanWei1954}
S.~Lang and A.~Weil, \emph{Number of points of varieties in finite fields},
  Amer. J. Math. \textbf{76} (1954), 819--827.

\bibitem{Led2012}
E.~Leducq, \emph{Functions which are {PN} on infinitely many extensions of
  {$\mathbb{F}_p$}, $p$ odd}, arXiv:1006.2610v2 [math.NT] (to appear in Des.\
  Codes Cryptogr.).

\bibitem{LidNie1997}
R.~Lidl and H.~Niederreiter, \emph{Finite fields}, second ed., Encyclopedia of
  Mathematics and its Applications, vol.~20, Cambridge University Press,
  Cambridge, 1997.

\bibitem{MulZie2013}
P.~M\"uller and M.~E. Zieve, \emph{Low-degree planar monomials in
  characteristic two}, arXiv:1305.6597v1 [math.NT].

\bibitem{NybKnu1993}
K.~Nyberg and L.~R. Knudsen, \emph{Provable security against differential
  cryptanalysis}, Advances in cryptology---{CRYPTO} '92 ({S}anta {B}arbara,
  {CA}, 1992), Lecture Notes in Comput. Sci., vol. 740, Springer, Berlin, 1993,
  pp.~566--574.

\bibitem{Rod2009}
F.~Rodier, \emph{Borne sur le degr\'e des polyn\^omes presque parfaitement
  non-lin\'eaires}, Arithmetic, geometry, cryptography and coding theory,
  Contemp. Math., vol. 487, Amer. Math. Soc., Providence, RI, 2009,
  pp.~169--181.

\bibitem{SchZho2013}
K.-U. Schmidt and Y.~Zhou, \emph{Planar functions over fields of characteristic
  two}, arXiv:1301.6999v2 [math.CO] (to appear in J. Algebraic Combin.).

\bibitem{Zho2013}
Y.~Zhou, \emph{$(2^n,2^n,2^n,1)$-relative difference sets and their
  representations}, J. Combin. Des. \textbf{21} (2013), 563--584.

\bibitem{Zie2013}
M.~E. Zieve, \emph{Planar functions and perfect nonlinear monomials over finite
  fields}, arXiv:1301.5004v1 [math.CO] (to appear in Des.\ Codes Cryptogr.).

\bibitem{Zie2013a}
\bysame, \emph{Planar polynomials over finite fields}, 2013, preprint.

\end{thebibliography}

\providecommand{\bysame}{\leavevmode\hbox to3em{\hrulefill}\thinspace}
\providecommand{\MR}{\relax\ifhmode\unskip\space\fi MR }
\providecommand{\MRhref}[2]{%
  \href{http://www.ams.org/mathscinet-getitem?mr=#1}{#2}
}
\providecommand{\href}[2]{#2}

\end{document}